\documentclass{amsart}
\usepackage{latexsym}
\usepackage{amssymb,amsmath,amsfonts}
\newtheorem{theorem}{Theorem}[section]
  \newtheorem{lemma}[theorem]{Lemma}

  \theoremstyle{definition}
  \newtheorem{definition}[theorem]{Definition}

  \theoremstyle{remark}
  \newtheorem{remark}[theorem]{Remark}

  \numberwithin{equation}{section}
  \begin{document}
  \thispagestyle{empty}
  \title[Convergence Theorems]
   {Convergence results for a common solution of a finite family of equilibrium problems and quasi-Bregman nonexpansive mappings in Banach space}
  \date{}
    \author[G.C. Ugwunnadi and Bashir Ali]
  { G.C. Ugwunnadi$^{1}$ and  Bashir Ali$^{2}$  }
\address{$~^{1}$Department of Mathematics, \newline \indent \hspace{2mm} Michael Okpara University of Agriculture, \newline \indent \hspace{2mm} Umudike, Abia State, Nigeria} \email{ugwunnadi4u@yahoo.com}
\address{$~^{2}$Department of Mathematical Sciences, \newline \indent \hspace{2mm} Bayero University Kano\newline \indent \hspace{2mm} P.M.B. 3011  Kano, Nigeria} \email{bashiralik@yahoo.com}

 \keywords {  Fixed point,  equilibrium problem, quasi-Bregman nonexpansive, Banach space.
2000 Mathematics Subject classification. 47H09, 47J25.}
\begin{abstract}
In this paper, we introduce an iterative process for finding common fixed point of finite
family of quasi-Bregman nonexpansive mappings which is a unique solution of some equilibrium problem .

 \end{abstract}
\maketitle
\section{Introduction}
\noindent Let $E$ be a real reflexive Banach space, $C$ a nonempty subset of $E$. Let $T:C\to C$ be a map, a point $x\in C$ is called a fixed point of $T$ if $Tx=x$, and the set of all fixed points of $T$ is denoted by $F(T)$. The mapping $T$ is called $L-$Lipschitzian or simply Lipschitz if there exists $L>0$, such that $||Tx-Ty||\leq L||x-y||,~\forall x,y\in C$ and if $L=1$, then the map $T$ is called nonexpansive.\\[2mm]
Let $g:C\times C\to\mathbb{R}$ be a bifunction. The equilibrium problem with respect to $g$ is to find $$z\in C\quad\textrm{such that}\quad g(z,y)\geq0,~\forall y\in C.$$
The set of solution of equilibrium problem is denoted by $EP(g).$ Thus
$$EP(g):=\{z\in C: g(z,y)\geq0,~\forall y\in C\}.$$
Numerous problems in Physics, Optimization and Economics reduce to finding a solution of the equilibrium problem. Some methods have been proposed to solve equilibrium problem in Hilbert spaces; see for example Blum and Oettli \cite{blum}, Combettes and Hirstoaga \cite{com}. Recently, Tada and Takahashi \cite{tada,tak1} and Takahashi and Takahashi \cite{stak} obtain weak and strong convergence theorems for finding a common element of the set of solutions of an equilibrium problem and set of fixed points of nonexpansive mapping in Hilbert space. In particular, Tada and Takahashi \cite{tak1} establish a strong convergence theorem for finding a common element of the two sets by using the hybrid method introduced in Nakajo and Takahashi \cite{nakajo}. They also proved such a strong convergence theorem in a uniformly convex and uniformly smooth Banach space.\\[2mm]
In 1967, Bregman \cite{bregman} discovered an elegant and effective technique for using so-called Bregman distance
function $D_f$ see, \eqref{eq1} in the process of designing and analyzing feasibility and optimization
algorithms. This opened a growing area of research in which Bregman's technique has been applied in various ways in order to design and analyze iterative algorithms for solving feasibility and optimization
problems.\\
Let $f:E\to (-\infty,+\infty]$ be a convex and G$\hat{a}$teaux differentiable function. The function $D_{f}:\textrm{dom} f\times \textrm{int dom} f\to[0,+\infty)$ defined as follows:
\begin{eqnarray}\label{eq1}
D_{f}(y,x):= f(y)-f(x)-\langle\nabla f(x),y-x\rangle
\end{eqnarray}
is called the Bregman distance with respect to $f$ (see \cite{cen}). It is obvious from the definition of $D_{f}$ that
\begin{eqnarray}\label{eq2}
D_{f}(z,x)=D_{f}(z,y)+D_{f}(y,x)+\langle\nabla f(y)-\nabla f(x),z-y\rangle.
\end{eqnarray}
We observed from \eqref{eq2}, that for any $y_{1},y_{2},\cdots,y_{N}\in E$, the following holds
\begin{eqnarray}\label{projsum}
D_{f}(y_{1},y_{N})=\sum^{N}_{k=2}D_{f}(y_{k-1},y_{k})+\sum^{N}_{k=3}\langle\nabla f(y_{k-1})-\nabla f(y_{k}),y_{k-1}-y_{1}\rangle.
\end{eqnarray}

Recall that the Bregman projection \cite{bregman} of $x\in\textrm{int dom} f$ onto the nonempty closed and convex set $C\subset \textrm{dom} f$ is the necessarily unique vector $P^{f}_{C}(x)\in C$ satisfying
$$D_{f}(P^{f}_{C}(x),x)=\inf\{D_{f}(y,x):y\in C\}.$$
A mapping $T$ is said to be Bregman firmly nonexpansive \cite{reich3}, if for all $x,y\in C,$
$$\langle\nabla f(Tx)-\nabla f(Ty),Tx-Ty\rangle\leq\langle\nabla f(x)-\nabla f(y),Tx-Ty\rangle$$
or equivalently,
$$D_{f}(Tx,Ty)+D_{f}(Ty,Tx)+D_{f}(Tx,x)+D_{f}(Ty,y)\leq D_{f}(Tx,y)+D_{f}(Ty,x).$$
A point $p\in C$ is said to be asymptotic fixed point of a map $T$, if for any sequence $\{x_n\}$ in $C$ which converges weakly to $p$, and $\underset{n\to\infty}{\lim}||x_n-Tx_n||=0$. We denote by $\hat{F}(T)$ the set of asymptotic fixed points of $T$. Let $f:E\to\mathbb{R}$, a mapping $T:C\to C$ is said to be Bregman relatively nonexpansive \cite{mat} if $F(T)\neq\empty,\hat{F}(T)=F(T)$ and $D_{f}(p,T(x))\leq D_{f}(p,x)$ for all $x\in C$ and $p\in F(T)$. $T$ is said to be quasi-Bregman relatively nonexpansive if $F(T)\neq\emptyset,$ and $D_{f}(p,T(x))\leq D_{f}(p,x)$ for all $x\in C$ and $p\in F(T)$.\\[2mm]
Recently, by using the Bregman projection, in 2011 Reich and Sabach \cite{reich3} proposed  algorithms for
finding common fixed points of finitely many Bregman firmly nonexpansive operators in a reflexive Banach space.
\begin{eqnarray}\label{1.4}
 {} \left\{ \begin{array}{ll}
 x_0\in E  & \textrm{ $  $}
 \\ Q^{i}_{0}=E, i=1,2,\cdots,N & \textrm{ $  $}
 \\ u_{n}\in C ~\textrm{such that} & \textrm{ $  $}
 \\ y^{i}_{n}=T_{i}(x_{n}+e^{i}_{n}),  & \textrm{ $  $}
 \\ Q^{i}_{n+1}=\{z\in Q^{i}_{n}:\langle\nabla f(x_n+e^{i}_{n})-\nabla f(y^{i}_{n}),z-y^{i}_{n}\rangle\leq 0\}, & \textrm{ $  $}
 \\ C_{n}=\bigcap^{N}_{i=1}C_{n}^{i},  & \textrm{ $  $}
 \\ x_{n+1}=P^{f}_{C_{n+1}}x_{0},n\geq0.
\end{array}  \right.
\end{eqnarray}
Under some suitable conditions, they proved that the sequence  generated by \eqref{1.4} converges strongly to $\bigcap^{N}_{i=1}F(T_i)$ and applied the result for the solution of convex feasibility and equilibrium problems.\\[2mm]
 In 2011, Chen et al. \cite{20}, introduced the concept of weak Bregman relatively nonexpansive mappings in a reflexive Banach space and gave an example to illustrate the existence of a weak Bregman relatively nonexpansive
mapping and the difference between a weak Bregman relatively nonexpansive mapping and a Bregman relatively
nonexpansive mapping. They also proved strong convergence of the sequences generated by the constructed
algorithms with errors for finding a fixed point of weak Bregman relatively nonexpansive mappings and Bregman
relatively nonexpansive mappings under some suitable conditions.\\
\noindent Recently in 2014, Alghamdi et al. \cite{Alg} proved a strong convergence theorem for the common fixed point of finite family of quasi-Bregman nonexpansive mappings. Pang $et\:al.$ \cite{Panc} proved weak convergence theorems for Bregman relatively nonexpansive mappings. While, Zegeye and Shahzad in \cite{Zeg} and \cite{Zege} proved a strong convergence theorem for the common fixed point of finite family of right Bregman strongly nonexpansive mappings and Bregman weak relatively nonexpansive mappings in reflexive Banach space respectively.\\

In 2015 Kumam et al.\cite{kumam} introduced the following algorithm:
\begin{eqnarray}\label{kum}
 {} \left\{ \begin{array}{ll}
 x_1=x\in C  & \textrm{ $  $}
 \\ z_{n}=Res^{f}_{g}(x_{n}) & \textrm{ $  $}
 \\ y_{n}=\nabla f^{*}(\beta_{n}\nabla f^{*}(x_n)+(1-\beta_{n})\nabla f^{*}(T_{n}(z_n))) & \textrm{ $  $}
 \\ x_{n+1}=\nabla f^{*}(\alpha_{n}\nabla f^{*}(x_n)+(1-\alpha_{n})\nabla f^{*}(T_{n}(y_n))),
\end{array}  \right.
\end{eqnarray}
where $T_{n},n\in\mathbb{N}$, is a Bregman strongly nonexpansive mapping. They proved that the sequence $\{x_{n}\}$ which is generated by the algorithm \eqref{kum} converges strongly to the point $P_{\Omega}^{f}x$, where $\Omega:=F(T)\cap EP(g).$

Motivated and inspired by the above works, in this paper, we prove a new strong convergence theorem for
finite family of quasi-Bregman nonexpansive mapping and system of equilibrium problem in a real Banach space.

\section{Preliminaries}
\noindent Let $E$ be a real reflexive Banach space with the norm $||.||$ and $E^*$ the dual space of $E$. Throughout this paper, we shall assume $f:E\to(-\infty,+\infty]$ is a proper, lower semi-continuous and convex function. We denote by dom$f~:=\{x\in E:f(x)<+\infty\}$ as the domain of $f$.\\
Let $x\in\textrm{int dom}f$, the subdifferential of $f$ at $x$ is the convex set defined by
$$\partial f(x)=\{x^*\in E^*:f(x)+\langle x^*,y-x\rangle\leq f(y),~\forall y\in E\},$$
where the Fenchel conjugate of $f$ is the function $f^*:E^*\to(-\infty,+\infty]$ defined by
$$f^*(x^*)=\sup\{\langle x^*,x\rangle-f(x): x\in E\}.$$
We know that the Young-Fenchel inequality holds:
$$\langle x^*,x\rangle\leq f(x)+f^*(x^*),~~\forall x\in E,~x^*\in E^*.$$
A function $f$ on $E$ is coercive \cite{hir} if the sublevel set of $f$ is bounded; equivalently,
$$\underset{||x||\to+\infty}{\lim}f(x)=+\infty.$$
A function $f$ on $E$ is said be strongly coercive \cite{zal} if
$$\underset{||x||\to+\infty}{\lim}\frac{f(x)}{||x||}=+\infty.$$
For any $x\in \textrm{int dom}f$ and $y\in E$, the right-hand derivative of $f$ at $x$ in the direction $y$ is defined by
$$f^{\circ}(x,y):=\underset{t\to0^+}{\lim}\frac{f(x+ty)-f(x)}{t}.$$
The function $f$ is said to be G$\hat{a}$teaux differentiable at $x$ if ${\lim}_{t\to0^+}\frac{f(x+ty)-f(x)}{t}$ exists for any $y$. In this case, $f^{\circ}(x,y)$ coincides with $\nabla f(x)$, the value of the gradient $\nabla f$ of $f$ at $x$. The function $f$ is said to be G$\hat{a}$teaux differentiable if it is G$\hat{a}$eaux differentiable for any $x\in\textrm{int dom}f$. The function $f$ is said to be Fr$\acute{e}$chet differentiable at $x$ if this limit is attained uniformly in $||y||=1.$ Finally, $f$ is said to be uniformly Fr$\acute{e}$chet differentiable on a subset $C$ of $E$ if the limit is attained uniformly for $x\in C$ and $||y||=1$. It is known that if $f$ is G$\hat{a}$teaux differentiable (resp. Fr$\acute{e}$chet differentiable) on int dom$f$, then $f$ is continuous and its G$\hat{a}$teaux derivative $\nabla f$ is norm-to-weak$^*$ continuous (resp. continuous) on int dom$f$ (see also \cite{asp,bon}). We will need the following results.
\begin{lemma}\cite{reich}\label{le1}
If $f:E\to\mathbb{R}$ is uniformly Fr$\acute{e}$chet differentiable and bounded on bounded subsets of $E$, then $\nabla f$ is uniformly continuous on bounded subsets of $E$ from the strong topology of $E$ to the strong topology of $E^*$.
\end{lemma}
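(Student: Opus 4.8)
The plan is to argue straight from the definitions. Fix a bounded subset $B\subseteq E$; after enlarging it we may assume $B=\{x\in E:\|x\|\le R\}$ for some $R>0$, and put $B_1=\{x\in E:\|x\|\le R+1\}$. Given $\varepsilon>0$, the goal is to produce $\delta>0$, depending only on $\varepsilon$ and $B$, such that $x,y\in B$ with $\|x-y\|\le\delta$ forces $\|\nabla f(x)-\nabla f(y)\|_{E^*}\le\varepsilon$. First I would record two consequences of the hypotheses. Because $f$ is convex and bounded on bounded subsets of $E$, it is Lipschitz continuous on each bounded subset of $E$ (a locally bounded convex function is locally Lipschitz, and boundedness on all bounded sets upgrades this to a Lipschitz bound on each bounded set); fix a Lipschitz constant $L>0$ for $f$ on $B_1$. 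Next, rewriting the uniform Fréchet differentiability of $f$ on the bounded set $B_1$ in increment form, there is a fixed number $s\in(0,1]$, depending only on $\varepsilon$ and $B_1$, such that
\[
\bigl|f(u+sh)-f(u)-s\langle\nabla f(u),h\rangle\bigr|\le\tfrac{\varepsilon}{4}\,s
\qquad\text{for all } u \text{ with } \|u\|\le R+1 \text{ and all unit } h .
\]
Here $s$ is a once-and-for-all auxiliary step, chosen after $\varepsilon$ but before $x,y,h$.

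Then, for $x,y\in B$ with $\|x-y\|\le\delta$ (with $\delta$ still to be chosen) and an arbitrary unit vector $h$, I would apply the displayed estimate at $u=x$ and at $u=y$ and add the two resulting inequalities, obtaining
\[
s\langle\nabla f(x)-\nabla f(y),h\rangle\le\bigl[f(x+sh)-f(y+sh)\bigr]+\bigl[f(y)-f(x)\bigr]+\tfrac{\varepsilon}{2}\,s .
\]
Since $s\le1$, the four points $x$, $y$, $x+sh$, $y+sh$ all lie in $B_1$, and $\|(x+sh)-(y+sh)\|=\|x-y\|\le\delta$, so the Lipschitz bound makes each bracketed term at most $L\delta$ in absolute value. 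Hence $\langle\nabla f(x)-\nabla f(y),h\rangle\le 2L\delta/s+\varepsilon/2$, and applying the same reasoning to $-h$ gives $\|\nabla f(x)-\nabla f(y)\|_{E^*}\le 2L\delta/s+\varepsilon/2$. Choosing $\delta:=\varepsilon s/(4L)$, which depends only on $\varepsilon$ and $B$, makes the right-hand side equal to $\varepsilon$, which is exactly what was required.

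The one subtle point is the order of the quantifiers: the auxiliary step $s$ must be frozen first, using the uniformity in $u$ built into the definition of uniform Fréchet differentiability, and only afterwards may $\delta$ be chosen in terms of $s$ and $L$. The single ingredient that is not purely formal is the local Lipschitz continuity of a locally bounded convex function, and this is precisely where the ``bounded on bounded subsets'' hypothesis is used in an essential way, since it is what allows the differences $f(x+sh)-f(y+sh)$ and $f(y)-f(x)$ to be estimated uniformly in terms of $\|x-y\|$. I do not expect any real obstacle beyond carefully tracking these constants and the step $s$.
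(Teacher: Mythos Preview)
The paper does not supply its own proof of this lemma; it is merely quoted from \cite{reich} (Reich and Sabach) as a preliminary result, so there is nothing in the paper to compare your argument against. Your proof is correct and complete: the order of quantifiers is handled properly (freeze the uniform step $s$ first, then choose $\delta$), and all four points $x,y,x+sh,y+sh$ do lie in $B_1$ as you claim since $s\le 1$ and $x,y\in B$.

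One small remark on your appeal to convexity for the Lipschitz bound on $B_1$. This is legitimate here because convexity of $f$ is a standing assumption in the paper's preliminaries, but it is worth noting that convexity is not actually needed for this step: uniform Fr\'echet differentiability together with boundedness of $f$ on bounded sets already forces $\nabla f$ to be bounded on bounded sets (apply your displayed inequality with, say, $\varepsilon=1$ and solve for $\langle\nabla f(u),h\rangle$), and then the mean-value inequality yields the Lipschitz estimate directly. Either route gives the constant $L$ you need, so the argument goes through exactly as written.
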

\begin{definition}\cite{bau}\label{2}
The function $f$ is said to be:
\begin{itemize}
\item [(i)] essentially smooth, if $\partial f$ is both locally bounded and single-valued on its domain.
\item [(ii)] essentially strictly convex, if $(\partial f)^{-1}$ is locally bounded on its domain and $f$ is strictly convex on every convex subset of dom$\partial f$.
    \item [(iii)] Legendre, if it is both essentially smooth and essentially strictly convex.
    \end{itemize}
    \end{definition}
\begin{remark} Let $E$ be a reflexive Banach space. Then we have
\begin{itemize}
\item [(i)] $f$ is essentially smooth if and only if $f^{*}$ is essentially strictly convex (see \cite{bau}, Theorem 5.4).
\item [(ii)] $(\partial f)^{-1}=\partial f^{*}$ (see \cite{bon})
\item [(iii)] $f$ is Legendre if and only if $f^{*}$ is Legendre, (see \cite{bau}, Corollary 5.5).
\item [(iv)] If $f$ is Legendre, then $\nabla f$ is a bijection satisfying\\ $\nabla f~=(\nabla f^*)^{-1}$, ran $\nabla f~=$ dom $\nabla f^*~=$ int dom $f^*$ and ran $\nabla f^*~=$ dom $  f~=$ int dom $f$, (see \cite{bau}, Theorem 5.10).
    \end{itemize}
    \end{remark}

    The following result was prove in \cite{narag}, (see also \cite{narag1}).
  \begin{lemma}\label{jen} Let $E$ be a Banach space, $r>0$ be a constant, $\rho_{r}$ be the gauge of uniform convexity of $g$ and $g:E\to\mathbb{R}$ be a convex function which is uniformly convex on bounded subsets of $E$. Then
\begin{itemize}
\item[(i)] For any $x,y\in B_{r}$ and $\alpha\in(0,1)$,
$$g(\alpha x+(1-\alpha)y)\leq\alpha g(x)+(1-\alpha)g(y)-\alpha(1-\alpha)\rho_{r}(||x-y||).$$
\item[(ii)] For any $x,y\in B_{r}$,
$$\rho_{r}(||x-y||)\leq D_{g}(x,y)$$
\item[(iii)] If, in addition, $g$ is bounded on bounded subsets and uniformly convex on bounded subsets of $E$ then, for any $x\in E, y^*,z^*\in B_{r}$ and $\alpha\in(0,1)$,
  $$V_{g}(x,\alpha y^*+(1-\alpha)z^*)\leq\alpha V_{g}(x,y^*)+(1-\alpha)V_{g}(x,z^*)-\alpha(1-\alpha)\rho^*_{r}(||y^*-x^*||).$$
  \end{itemize}
  \end{lemma}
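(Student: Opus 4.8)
\noindent The plan is to treat the three items in turn, deducing (ii) and (iii) from (i). Recall that $\rho_r$ denotes the gauge of uniform convexity of $g$ on the ball $B_r:=\{x\in E:\|x\|\le r\}$; with the usual normalization this says that
\[
\rho_r(t)=\inf\Big\{\frac{\alpha g(x)+(1-\alpha)g(y)-g\big(\alpha x+(1-\alpha)y\big)}{\alpha(1-\alpha)}:\ x,y\in B_r,\ \|x-y\|=t,\ \alpha\in(0,1)\Big\}.
\]
Under this convention, (i) is exactly the defining inequality of $\rho_r$, the only remark being that $\|x-y\|\le 2r$ for $x,y\in B_r$, so that $\rho_r$ is always evaluated where it is finite. (If one instead adopts the midpoint normalization of the modulus, (i) becomes the content of a classical lemma recovering the $\alpha$-version — proved by iterating the midpoint inequality on dyadic $\alpha$ and extending to all $\alpha\in(0,1)$ by continuity of $t\mapsto g\big(y+t(x-y)\big)$ — which I would simply quote.)

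For (ii), fix $x,y\in B_r$ and apply (i) with these points. Writing $w_\alpha:=\alpha x+(1-\alpha)y=y+\alpha(x-y)$, the inequality in (i) rearranges to
\[
(1-\alpha)\,\rho_r(\|x-y\|)\ \le\ \big(g(x)-g(y)\big)-\frac{g(w_\alpha)-g(y)}{\alpha},\qquad \alpha\in(0,1).
\]
I would then let $\alpha\to0^+$. Since $g$ is real-valued and convex, the one-variable function $t\mapsto g\big(y+t(x-y)\big)$ is convex on $[0,1]$, so its difference quotient $\alpha^{-1}\big(g(w_\alpha)-g(y)\big)$ is nondecreasing in $\alpha$ and decreases, as $\alpha\to0^+$, to $g^{\circ}(y,x-y)$, i.e. to $\langle\nabla g(y),x-y\rangle$ when $g$ is G\^{a}teaux differentiable (as is implicit here, $D_g$ appearing in the statement). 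Hence the right-hand side above increases to $g(x)-g(y)-\langle\nabla g(y),x-y\rangle=D_g(x,y)$ while the left-hand side tends to $\rho_r(\|x-y\|)$; since the inequality holds for every $\alpha\in(0,1)$, passing to the limit yields (ii).

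For (iii), the governing identity is $V_g(x,x^*)=g(x)-\langle x^*,x\rangle+g^*(x^*)$: with $x$ held fixed, $x^*\mapsto V_g(x,x^*)$ equals the Fenchel conjugate $g^*$ plus an affine functional of $x^*$, so it has, on balls of $E^*$, the very same modulus of uniform convexity as $g^*$. Thus (iii) is just the inequality of (i) applied to $g^*$ at $y^*,z^*\in B_r\subset E^*$ with parameter $\alpha$, after adding the affine term back to both sides; here $\rho_r^*$ is the gauge of uniform convexity of $g^*$ on $B_r$, and the ``$x^*$'' on the right of the displayed inequality is a misprint for $z^*$, the term being $\rho_r^*(\|y^*-z^*\|)$. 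The genuinely substantive step — and the one I expect to be the main obstacle — is to show that $g^*$ really is uniformly convex on bounded subsets of $E^*$, so that $\rho_r^*$ is meaningful and (i) may be applied to $g^*$ at all. This is precisely where the extra hypotheses ($g$ bounded on bounded subsets, uniformly convex on bounded subsets) together with the conjugation and Legendre-duality facts recorded above come in; I would invoke the standard theorems (e.g. those of Z\u{a}linescu) relating uniform convexity of a function on bounded sets to the regularity of its Fenchel conjugate, rather than reprove them. Granting that input, (i)--(iii) cost nothing beyond the definition, a one-line passage to a limit, and an affine shift.
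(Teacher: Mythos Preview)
The paper does not prove this lemma at all: it is quoted verbatim from Naraghirad--Yao and Naraghirad--Timnak (the references \texttt{narag} and \texttt{narag1}), with the sentence ``The following result was prove in [narag], (see also [narag1])'' and no further argument. So there is no in-paper proof to compare your proposal against.

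That said, your proposal is essentially the standard argument one finds in those sources: (i) is the definition of $\rho_r$, (ii) follows from (i) by the limit $\alpha\to 0^+$ exactly as you describe, and (iii) is (i) applied to $g^*$ once one knows $g^*$ is uniformly convex on bounded subsets of $E^*$. Your identification of the last point as the only genuinely nontrivial step, and your decision to outsource it to the Z\u{a}linescu-type duality theorems (the paper's Theorems~\ref{jen1} and~\ref{jen2}), matches how the cited references handle it. One caveat: as stated in the paper, the extra hypothesis in (iii) (``$g$ bounded on bounded subsets and uniformly convex on bounded subsets'') merely repeats the standing assumption and does not by itself yield uniform convexity of $g^*$; what is actually used in the original sources is a uniform-smoothness-type condition on $g$ (or equivalently strong coercivity plus boundedness, feeding into Theorem~\ref{jen2}). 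You are right to flag this as the place where the quoted theorems do the real work, and your observation that the ``$x^*$'' in the displayed inequality should read ``$z^*$'' is also correct.
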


\begin{lemma}(\cite{jenchi})
Let $E$ be a Banach space, let $r>0$ be a constant and let $f:E\to \mathbb{R}$ be a continuous and convex function which is uniformly convex on bounded subsets of $E$. Then
$$f\Big(\sum^{\infty}_{k=0}\alpha_{k}x_{k}\Big)\leq\sum^{\infty}_{k=0}\alpha_{k}f(x_{k})-\alpha_{i}\alpha_{j}\rho_{r}(||x_{i}-x_{j}||)$$
for all $i,j\in \mathbb{N}\cup\{0\},x_{k}\in B_{r},\alpha_{k}\in(0,1)$ and $k\in\mathbb{N}\cup\{0\}$ with $\sum^{\infty}_{k=0}\alpha_{k}=1$, where $\rho_{r}$ is the gauge of uniform convexity of $f.$
\end{lemma}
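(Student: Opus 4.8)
The plan is to bootstrap everything from the two-point inequality in Lemma~\ref{jen}(i): first upgrade it to a finite-sum inequality carrying the single correction term $\alpha_i\alpha_j\rho_r(\|x_i-x_j\|)$, and then pass to the infinite sum by a truncate-and-pass-to-the-limit argument using the continuity of $f$. Before that, one checks that all the series involved make sense. Since $x_k\in B_r$ and $\sum_{k}\alpha_k=1$, the partial sums $S_n:=\sum_{k=0}^n\alpha_k x_k$ satisfy $\|S_n-S_m\|\le r\sum_{k=m+1}^{n}\alpha_k$, hence are Cauchy, so $S:=\sum_{k=0}^\infty\alpha_k x_k$ exists in $E$. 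Next, $f$ being convex and finite-valued, it has a subgradient at a point of continuity and is therefore bounded below on the bounded set $B_r$, say by $m$; thus the increasing partial sums $\sum_{k=0}^n\alpha_k\big(f(x_k)-m\big)$ converge in $[0,+\infty]$, and if the limit is $+\infty$ then $\sum_{k=0}^\infty\alpha_k f(x_k)=+\infty$ and there is nothing to prove. So we may assume $\sum_{k=0}^\infty\alpha_k f(x_k)$ converges, whence $\sum_{k=0}^n\alpha_k f(x_k)\to\sum_{k=0}^\infty\alpha_k f(x_k)$.

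For the finite version, fix $i\ne j$ (if $i=j$ the asserted term is $\rho_r(0)=0$ and the inequality is just ordinary Jensen). Consider a finite convex combination $\sum_{k=0}^{n}\beta_k x_k$ with $x_k\in B_r$ and $\beta_k\in(0,1)$, $\sum_k\beta_k=1$; since both sides are invariant under permuting the indices, we may assume the distinguished pair is $\{0,1\}$. Put $\gamma:=\beta_0+\beta_1\in(0,1]$ and $w:=\gamma^{-1}(\beta_0 x_0+\beta_1 x_1)\in B_r$. Writing $\sum_{k=0}^{n}\beta_k x_k=\gamma w+\sum_{k=2}^{n}\beta_k x_k$ as a convex combination of $w,x_2,\dots,x_n$, the convexity of $f$ (equivalently Lemma~\ref{jen}(i) with $\rho_r\ge0$) together with an easy induction gives the usual finite Jensen inequality
$$f\Big(\sum_{k=0}^{n}\beta_k x_k\Big)\le\gamma f(w)+\sum_{k=2}^{n}\beta_k f(x_k).$$
Applying Lemma~\ref{jen}(i) to $w=\tfrac{\beta_0}{\gamma}x_0+\tfrac{\beta_1}{\gamma}x_1$ and multiplying by $\gamma$,
$$\gamma f(w)\le\beta_0 f(x_0)+\beta_1 f(x_1)-\tfrac{\beta_0\beta_1}{\gamma}\rho_r(\|x_0-x_1\|)\le\beta_0 f(x_0)+\beta_1 f(x_1)-\beta_0\beta_1\rho_r(\|x_0-x_1\|),$$
the last step because $\gamma\le1$ and $\rho_r\ge0$; combining the two displays yields $f\big(\sum_{k=0}^n\beta_k x_k\big)\le\sum_{k=0}^n\beta_k f(x_k)-\beta_0\beta_1\rho_r(\|x_0-x_1\|)$.

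Now let $\sigma_n:=\sum_{k=0}^{n}\alpha_k$, so that $0<\alpha_0\le\sigma_n<1$ and $\sigma_n\to1$. For $n\ge\max\{i,j\}$, apply the finite version to $y_n:=\sigma_n^{-1}S_n=\sum_{k=0}^{n}(\alpha_k/\sigma_n)x_k$, a convex combination of $x_0,\dots,x_n\in B_r$ with distinguished pair $\{i,j\}$, and multiply through by $\sigma_n$:
$$\sigma_n f(y_n)\le\sum_{k=0}^{n}\alpha_k f(x_k)-\frac{\alpha_i\alpha_j}{\sigma_n}\rho_r(\|x_i-x_j\|)\le\sum_{k=0}^{n}\alpha_k f(x_k)-\alpha_i\alpha_j\rho_r(\|x_i-x_j\|).$$
Since $S_n\to S$ and $\sigma_n\to1$ one has $y_n=\sigma_n^{-1}S_n\to S$ in norm, and $f$ is continuous, so $\sigma_n f(y_n)\to f(S)=f\big(\sum_{k=0}^\infty\alpha_k x_k\big)$; letting $n\to\infty$ the right-hand side tends to $\sum_{k=0}^\infty\alpha_k f(x_k)-\alpha_i\alpha_j\rho_r(\|x_i-x_j\|)$ by the first paragraph, and the desired inequality follows.

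The uniform-convexity hypothesis enters only through the single, entirely routine invocation of Lemma~\ref{jen}(i) in the second paragraph; the one place that needs care is the bookkeeping in the first and third paragraphs, namely making sure $\sum\alpha_k x_k$ converges, reducing the statement to the case in which $\sum\alpha_k f(x_k)$ converges, and checking $y_n\to S$ so that the continuity of $f$ can legitimately be used to pass $f$ through the limit. Everything else is elementary manipulation of convex combinations.
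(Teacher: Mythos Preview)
The paper does not prove this lemma at all: it is quoted verbatim from \cite{jenchi} (Naraghirad--Yao) and used as a black box, so there is no ``paper's own proof'' to compare against. Your argument is therefore strictly more than what the paper supplies.

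As a self-contained proof your proposal is correct. The well-definedness checks in the first paragraph are sound (Cauchy partial sums via $x_k\in B_r$ and $\sum\alpha_k=1$; lower bound on $f$ over $B_r$ via a subgradient at a point of continuity, which exists because $f$ is continuous convex on $E$). The finite-sum step is the standard reduction: isolate the pair $\{i,j\}$ inside a single point $w\in B_r$, apply ordinary Jensen to the remaining convex combination, and then apply Lemma~\ref{jen}(i) once to $w$; the factor $1/\gamma$ only improves the correction term since $\gamma\le1$. The passage to the limit via the normalized partial sums $y_n=\sigma_n^{-1}S_n$ and continuity of $f$ is clean, and your observation that the case $\sum_k\alpha_k f(x_k)=+\infty$ is vacuous disposes of any integrability worry. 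Nothing is missing.
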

We know the following two results; see \cite{zal}
\begin{theorem}\label{jen1}
Let $E$ be a reflexive Banach space and let $f: E \to\mathbb{R}$ be a convex function
which is bounded on bounded subsets of $E$. Then the following assertions are equivalent:
\begin{itemize}
\item[(1)] $f$ is strongly coercive and uniformly convex on bounded subsets of $E$;
\item[(2)] $dom f^* = E^*$, $f^*$ is bounded on bounded subsets and uniformly smooth on bounded
subsets of $E^*$;
\item[(3)] $dom f^* = E^*, f^*$ is Frechet differentiable and $\nabla f$ is uniformly norm-to-norm
continuous on bounded subsets of $E^*$.\end{itemize}
\end{theorem}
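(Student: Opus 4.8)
The plan is to prove the chain $(1)\Rightarrow(2)\Rightarrow(3)\Rightarrow(1)$, the engine being the Fenchel‑conjugation duality between uniform convexity and uniform smoothness, localized to bounded sets, together with the elementary observation that for a convex function strong coercivity is the ``dual'' of having a full‑domain, bounded‑on‑bounded conjugate. For $(1)\Rightarrow(2)$ I would first check $\operatorname{dom}f^*=E^*$: since $f$ is strongly coercive, for fixed $x^*\in E^*$ the function $x\mapsto\langle x^*,x\rangle-f(x)$ tends to $-\infty$ as $\|x\|\to\infty$, and as $f$ is lower semicontinuous and bounded (hence bounded below) on bounded sets, the supremum defining $f^*(x^*)$ is finite; a quantitative version of the same estimate, using the rate at which $f(x)/\|x\|\to+\infty$, gives that $f^*$ is bounded on bounded subsets of $E^*$. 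Then I would transfer uniform convexity of $f$ on bounded subsets of $E$ to uniform smoothness of $f^*$ on bounded subsets of $E^*$: if $\rho_r$ is the gauge of uniform convexity of $f$ on $B_r$, the modulus of smoothness of $f^*$ on a fixed ball is dominated by a constant multiple of the Fenchel conjugate $\rho_r^*$, and $\rho_r^*(t)/t\to0$ as $t\to0^+$ because $\rho_r(t)/t\to+\infty$. Lemma \ref{jen}(iii) already records the relevant $V_f$‑inequality pairing $\rho_r$ with $\rho_r^*$, so this step is a sharpened, quantitative form of that estimate.

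The implication $(2)\Rightarrow(3)$ is nearly a matter of unwinding definitions. Writing $g:=f^*$, uniform smoothness on bounded subsets says $\sup_{\|y\|=1}\big(g(x+ty)+g(x-ty)-2g(x)\big)\le 2\rho_g(t)$ with $\rho_g(t)/t\to0$, uniformly for $x$ in a bounded set; this forces $g$ to be Fr\'echet differentiable at each such $x$, and, via a monotonicity estimate controlling $\langle\nabla g(x)-\nabla g(y),x-y\rangle$ in terms of $\rho_g(\|x-y\|)$ together with boundedness of $\nabla g$ on bounded sets, forces $\nabla g$ to be uniformly norm‑to‑norm continuous on bounded subsets of $E^*$. (I read the gradient in assertion $(3)$ as $\nabla f^*$; writing $\nabla f$ is consistent only under the Legendre identification $\nabla f=(\nabla f^*)^{-1}$ from the earlier Remark.)

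For $(3)\Rightarrow(1)$ I would recover uniform smoothness of $f^*$ on bounded sets from the integral identity
$$f^*(y)-f^*(x)-\langle\nabla f^*(x),y-x\rangle=\int_0^1\big\langle\nabla f^*(x+s(y-x))-\nabla f^*(x),\,y-x\big\rangle\,ds,$$
bounding the integrand by the modulus of uniform norm‑to‑norm continuity of $\nabla f^*$; this re‑establishes $(2)$ in its uniform‑smoothness formulation. Since $E$ is reflexive and $f$ is proper, lower semicontinuous and convex, $f^{**}=f$, so running the conjugation argument of $(1)\Rightarrow(2)$ in the opposite direction turns uniform smoothness of $f^*$ on bounded sets into uniform convexity of $f$ on bounded sets. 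Finally, $\operatorname{dom}f^*=E^*$ together with continuity of $f^*$ (it is Fr\'echet differentiable) is precisely the dual certificate that $f$ is strongly coercive, completing the cycle.

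The main obstacle is the bounded‑set‑localized conjugation duality between $\rho_r$ and $\rho_r^*$: unlike the classical global Asplund‑type statement, uniform convexity and uniform smoothness are assumed only on each ball, so one must track carefully how the radius on the primal side and the radius on the dual side interact --- passing through enlarged or shrunken constants --- and verify that the resulting dual modulus still has the defining $o(t)$ behaviour near $0$. Once this quantitative dictionary between the two moduli is established, the coercivity $\leftrightarrow$ full‑conjugate‑domain equivalence and the differentiability reformulations are routine.
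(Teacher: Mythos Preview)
The paper does not prove this theorem at all: it is quoted verbatim as a known result from Z\u{a}linescu's monograph (see the sentence ``We know the following two results; see \cite{zal}'' preceding the statement). There is therefore no in-paper proof to compare your proposal against.

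That said, your sketch is essentially the standard route such proofs take (and is, in outline, how Z\u{a}linescu argues): the cycle $(1)\Rightarrow(2)\Rightarrow(3)\Rightarrow(1)$ driven by the Fenchel duality between moduli of uniform convexity and uniform smoothness, together with the strong-coercivity $\leftrightarrow$ full-domain-conjugate equivalence. You also correctly flag the apparent typo in item~(3), where ``$\nabla f$ is uniformly norm-to-norm continuous on bounded subsets of $E^*$'' should read $\nabla f^*$; this is consistent with the companion Theorem~\ref{jen2}, whose item~(2) has the analogous statement for $f^*$ itself. The one place where your sketch is genuinely thin is the localized duality step: you acknowledge the difficulty of tracking how a primal radius $r$ maps to a dual radius, but you do not actually carry it out, and this bookkeeping (showing that the supremum defining $f^*(x^*)$ for $x^*$ in a dual ball is attained on a primal ball of controlled radius, via strong coercivity) is where most of the work lies. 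If you were to write this up in full, that is the part requiring care; the rest is, as you say, routine.
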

\begin{theorem}\label{jen2}
 Let $E$ be a reflexive Banach space and let $f : E\to\mathbb{R}$ be a continuous convex
function which is strongly coercive. Then the following assertions are equivalent:
\begin{itemize}\item[(1)] $f$ is bounded on bounded subsets and uniformly smooth on bounded subsets of $E$;
\item[(2)] $f^*$ is Frechet differentiable and $ f^*$ is uniformly norm-to-norm continuous on
bounded subsets of $E^*$;
\item[(3)] $dom f^* = E^*, f^*$ is strongly coercive and uniformly convex on bounded subsets of $E^*$.\end{itemize}
\end{theorem}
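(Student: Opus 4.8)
The plan is to obtain Theorem~\ref{jen2} as the ``dual'' of Theorem~\ref{jen1}: rather than applying Theorem~\ref{jen1} to $f$ itself, I would apply it to the Fenchel conjugate $g:=f^{*}$, which lives on the reflexive space $E^{*}$, and then rewrite each of the three resulting conditions by means of the Fenchel--Moreau identity $g^{*}=f^{**}=f$ (valid because $E$ is reflexive and $f$, being real-valued, convex and continuous, is proper and lower semicontinuous). Since $(E^{*})^{*}=E$, condition (1) of Theorem~\ref{jen1} applied to $g$ becomes ``$f^{*}$ is strongly coercive and uniformly convex on bounded subsets of $E^{*}$'', condition (2) becomes ``$f=f^{**}$ is bounded on bounded subsets and uniformly smooth on bounded subsets of $E$'', and condition (3) becomes ``$f=f^{**}$ is Fr\'echet differentiable and $\nabla f$ is uniformly norm-to-norm continuous on bounded subsets of $E$''; these are, respectively, assertions (3), (1) and (2) of Theorem~\ref{jen2}.

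Before Theorem~\ref{jen1} can be invoked with $g=f^{*}$ one must verify that $g$ satisfies its standing hypotheses, namely that it is a real-valued convex function on the reflexive space $E^{*}$ which is bounded on bounded subsets of $E^{*}$. Convexity of $f^{*}$ and reflexivity of $E^{*}$ are immediate, so the substantive point is that the strong coercivity of $f$ forces $\operatorname{dom}f^{*}=E^{*}$ together with boundedness of $f^{*}$ on bounded subsets of $E^{*}$. Indeed, for $\|x^{*}\|\le r$ strong coercivity provides $R>0$, depending on $r$ but not on $x^{*}$, with $f(x)\ge(r+1)\|x\|$ whenever $\|x\|\ge R$; consequently in $f^{*}(x^{*})=\sup_{x\in E}\big(\langle x^{*},x\rangle-f(x)\big)$ the values with $\|x\|\ge R$ are dominated by the value at $x=0$ and may be discarded, so the supremum effectively ranges over the ball $\{\|x\|\le R\}$, on which $f$ is bounded below since a continuous convex function admits a continuous affine minorant. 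This produces a finite upper bound for $f^{*}$ on $\{\|x^{*}\|\le r\}$, while the affine minorant $f^{*}(x^{*})\ge\langle x^{*},x_{0}\rangle-f(x_{0})$ (any fixed $x_{0}\in E$) furnishes a matching lower bound; hence $f^{*}$ is real-valued and bounded on bounded subsets of $E^{*}$. In particular the clause $\operatorname{dom}f^{*}=E^{*}$ occurring in assertion (3) of Theorem~\ref{jen2} is automatic under the hypothesis on $f$ and so imposes nothing.

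With the hypotheses checked, Theorem~\ref{jen1} applied to $g=f^{*}$ gives the equivalence of the three rewritten conditions, that is, the equivalence of (1), (2) and (3) of Theorem~\ref{jen2}. I expect the only real obstacle to be organizational: carefully tracking the conjugation bookkeeping ($E\leftrightarrow E^{*}$, $f\leftrightarrow f^{*}$, ``uniform convexity'' $\leftrightarrow$ ``uniform smoothness'') and confirming that $f^{*}$ inherits boundedness on bounded sets from the strong coercivity of $f$. The genuinely hard analytic content --- the Fenchel duality between the modulus of uniform convexity of a function and the modulus of uniform smoothness of its conjugate, and the accompanying statements on Fr\'echet differentiability and uniform norm-to-norm continuity of the gradient --- is entirely packaged inside Theorem~\ref{jen1} and is used here as a black box.
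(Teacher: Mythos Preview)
The paper does not actually prove Theorem~\ref{jen2}: it is quoted, together with Theorem~\ref{jen1}, as a known result from Z\u{a}linescu's book (the sentence immediately preceding both theorems is ``We know the following two results; see \cite{zal}''). There is therefore no ``paper's own proof'' to compare your proposal against.

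That said, your duality strategy --- apply Theorem~\ref{jen1} to $g=f^{*}$ on the reflexive space $E^{*}$ and use Fenchel--Moreau to read $g^{*}=f$ --- is the natural and correct way to see Theorem~\ref{jen2} as a formal consequence of Theorem~\ref{jen1}, and your verification that strong coercivity of $f$ makes $f^{*}$ real-valued and bounded on bounded subsets of $E^{*}$ is the right preliminary step. One point deserves a flag: what your dualization of condition~(3) of Theorem~\ref{jen1} actually produces is ``$f$ is Fr\'echet differentiable and $\nabla f$ is uniformly norm-to-norm continuous on bounded subsets of $E$'', a statement about $f$; yet the paper's assertion~(2) in Theorem~\ref{jen2} is written, as it stands, about $f^{*}$. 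This is almost certainly one of several transcription errors in the paper's statements of these two theorems (note also that item~(3) of Theorem~\ref{jen1} writes $\nabla f$ where $\nabla f^{*}$ is clearly intended, and item~(2) of Theorem~\ref{jen2} writes ``$f^{*}$ is uniformly norm-to-norm continuous'' where a gradient is meant). In Z\u{a}linescu's original formulation the second condition is indeed about $f$, so your identification is the right one --- but you should say explicitly that you are correcting the printed statement rather than silently replacing $f^{*}$ by $f$.
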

The following result was first proved in \cite{but} (see also \cite{koh}).
\begin{lemma}\label{jen3} Let $E$ be a reflexive Banach space, let $f : E\to\mathbb{R}$ be a strongly coercive Bregman
function and let $V$ be the function defined by
$$V(x,x^*)=f(x)-\langle x,x^*\rangle+f^*(x^*),~~x\in E,~~x^*\in E^*$$
Then the following assertions hold:
\begin{itemize}\item[(1)] $D_{f} (x,\nabla f(x^*)) = V(x, x^*)$ for all $x\in E$ and $x^*\in E^*$.
\item[(2)] $V(x, x^*) +\langle\nabla f^*(x^*)-x,y^*\rangle\leq V(x, x^* + y^*)$ for all $x \in E$ and $x^*, y^*\in E^*$.
\end{itemize}
\end{lemma}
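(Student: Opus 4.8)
The plan is to obtain both assertions by a direct computation from the definitions of $D_f$, $f^*$ and $V$, the only structural input being the duality dictionary between $f$ and $f^*$. Since $f$ is a strongly coercive Bregman function it is in particular Legendre, so $\nabla f$ is a single-valued bijection of $\mathrm{int\,dom}\,f$ onto $\mathrm{int\,dom}\,f^*=E^*$ with inverse $\nabla f^*$; in particular $\nabla f(\nabla f^*(x^*))=x^*$ for every $x^*\in E^*$, and, equivalently, $x^*\in\partial f(\nabla f^*(x^*))$, so the Young--Fenchel inequality is attained as an equality at the pair $(\nabla f^*(x^*),x^*)$, i.e. $\langle x^*,\nabla f^*(x^*)\rangle=f(\nabla f^*(x^*))+f^*(x^*)$.

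For assertion (1), set $z:=\nabla f^*(x^*)\in\mathrm{int\,dom}\,f$ and expand the Bregman distance via \eqref{eq1}:
\[
D_f(x,z)=f(x)-f(z)-\langle\nabla f(z),x-z\rangle=f(x)-f(z)-\langle x^*,x\rangle+\langle x^*,z\rangle,
\]
where I have used $\nabla f(z)=\nabla f(\nabla f^*(x^*))=x^*$. Substituting the Fenchel equality $\langle x^*,z\rangle-f(z)=f^*(x^*)$ gives $D_f(x,z)=f(x)-\langle x,x^*\rangle+f^*(x^*)=V(x,x^*)$, which is exactly (1).

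For assertion (2), I would simply compute the difference of the two sides from the definition of $V$:
\[
V(x,x^*+y^*)-V(x,x^*)=f^*(x^*+y^*)-f^*(x^*)-\langle x,y^*\rangle .
\]
Since $f^*$ is convex and $\nabla f^*(x^*)\in\partial f^*(x^*)$ (by $(\partial f)^{-1}=\partial f^*$), the subgradient inequality gives $f^*(x^*+y^*)\ge f^*(x^*)+\langle\nabla f^*(x^*),y^*\rangle$; inserting this bound yields
\[
V(x,x^*+y^*)-V(x,x^*)\ge\langle\nabla f^*(x^*),y^*\rangle-\langle x,y^*\rangle=\langle\nabla f^*(x^*)-x,y^*\rangle,
\]
which rearranges to (2).

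No genuinely hard step arises; the computation is routine once the duality relations are in hand. The only point that needs attention is the justification that $\nabla f$ and $\nabla f^*$ are honest single-valued inverse bijections and that the Young--Fenchel inequality holds with equality on the graph of $\partial f$ — and this is precisely what the Legendre property (guaranteed by the hypothesis that $f$ is a strongly coercive Bregman function, together with the Remark following Definition \ref{2}) provides, together with the fact that strong coercivity forces $\mathrm{dom}\,f^*=E^*$ so that $V$ and $\nabla f^*$ are everywhere defined on $E^*$; everything else is substitution.
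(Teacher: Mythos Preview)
Your proof is correct. Note that the paper does not actually supply its own proof of this lemma: it merely cites \cite{but} and \cite{koh}. Your direct computation --- identifying $D_f(x,\nabla f^*(x^*))$ with $V(x,x^*)$ via the Fenchel equality at the point $(\nabla f^*(x^*),x^*)$, and then deducing (2) from the subgradient inequality for the convex function $f^*$ at $x^*$ --- is exactly the standard argument one finds in those references, so there is no meaningful difference in approach. You also correctly read the obvious typo in the statement of (1) (it should be $\nabla f^*(x^*)$, not $\nabla f(x^*)$, since $x^*\in E^*$).
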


Examples of Legendre functions were given in \cite{bau,bau1}. One important and interesting Legendre function is $\frac{1}{p}||\cdot||^{p}(1< p<\infty)$ when $E$ is a smooth and strictly convex Banach space. In this case the gradient $\nabla f$ of $f$ is coincident with the generalized duality mapping of $E$, i.e., $ \nabla f = J_{p}(1< p< \infty)$. In particular, $\nabla f= I$ the identity mapping in Hilbert spaces. In the rest of this paper, we always assume that $f:E\to (-\infty,+\infty]$ is Legendre.\\[2mm]
Concerning the Bregman projection, the following are well known.
\begin{lemma}\cite{but}\label{le2} Let $C$ be a nonempty, closed and convex subset of a reflexive Banach space $E$. Let $f:E\to\mathbb{R}$ be a G$\hat{a}$teaux differentiable and totally convex function and let $x\in E.$ Then
\begin{itemize}
\item [(a)] $z=P^{f}_{C}(x)$ if and only if $\langle \nabla f(x)-\nabla f(z),y-z\rangle\leq0,~\forall y\in C.$
\item [(b)] $D_{f}(y,P^{f}_{C}(x))+D_{f}(P^{f}_{C}(x),x)\leq D_{f}(y,x),~\forall x\in E,~y\in C.$
\end{itemize}
\end{lemma}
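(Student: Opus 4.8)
The plan is to obtain both assertions from the two-point definition \eqref{eq1} together with the three-point identity \eqref{eq2}; no heavy machinery is needed beyond G$\hat{a}$teaux differentiability and the total convexity of $f$, the latter guaranteeing that $P^{f}_{C}(x)$ is well defined and unique.

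For the ``only if'' part of (a), I would use that, by definition, $z=P^{f}_{C}(x)$ minimises $y\mapsto D_{f}(y,x)$ over $C$. Fixing $y\in C$ and setting $z_{t}:=z+t(y-z)$, convexity of $C$ gives $z_{t}\in C$ for $t\in[0,1]$, hence $D_{f}(z_{t},x)\ge D_{f}(z,x)$; expanding via \eqref{eq1} and cancelling the terms independent of the first argument yields
$$f(z_{t})-f(z)-t\langle\nabla f(x),y-z\rangle\ge 0.$$
Dividing by $t>0$ and letting $t\to 0^{+}$, G$\hat{a}$teaux differentiability of $f$ at $z$ replaces the difference quotient by $\langle\nabla f(z),y-z\rangle$, giving $\langle\nabla f(z)-\nabla f(x),y-z\rangle\ge 0$, which is the stated inequality. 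Conversely, assuming $\langle\nabla f(x)-\nabla f(z),y-z\rangle\le 0$ for all $y\in C$, I would apply \eqref{eq2} to the triple $(y,z,x)$:
$$D_{f}(y,x)=D_{f}(y,z)+D_{f}(z,x)+\langle\nabla f(z)-\nabla f(x),y-z\rangle;$$
since $D_{f}(y,z)\ge 0$ and the last term is $\ge 0$ by hypothesis, $D_{f}(y,x)\ge D_{f}(z,x)$ for every $y\in C$, and uniqueness of the Bregman projection (from total convexity) forces $z=P^{f}_{C}(x)$.

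Part (b) then follows with one more application of \eqref{eq2}, now to the triple $(y,P^{f}_{C}(x),x)$: the resulting inner-product term $\langle\nabla f(P^{f}_{C}(x))-\nabla f(x),y-P^{f}_{C}(x)\rangle$ is nonnegative by part (a), so discarding it gives $D_{f}(y,P^{f}_{C}(x))+D_{f}(P^{f}_{C}(x),x)\le D_{f}(y,x)$. The only subtle point in the whole argument is the limit step in the ``only if'' direction, where one needs $z\in\textrm{int dom}f$ (automatic here, since $f$ is finite on all of $E$) so that $\lim_{t\to0^{+}}t^{-1}\big(f(z+t(y-z))-f(z)\big)=\langle\nabla f(z),y-z\rangle$; the rest is bookkeeping with \eqref{eq1} and \eqref{eq2}.
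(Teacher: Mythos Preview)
Your argument is correct. Note, however, that the paper does not supply its own proof of this lemma: it is quoted verbatim from \cite{but} (Butnariu--Resmerita), so there is no in-paper argument to compare against. What you have written is precisely the standard derivation one finds in that reference: the first-order optimality condition via the convex interpolation $z_{t}=z+t(y-z)$ for the forward implication in (a), the three-point identity \eqref{eq2} for the converse, and one more use of \eqref{eq2} together with (a) to obtain (b). The only remark is that the uniqueness of $P^{f}_{C}(x)$ you invoke in the ``if'' direction of (a) really comes from strict convexity of $f$ (which total convexity implies), and your observation that $z\in\textrm{int dom}f$ is automatic because $\textrm{dom}f=E$ is exactly what makes the G\^ateaux limit step legitimate.
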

Let $f:E\to(-\infty,+\infty]$ be a convex and G$\hat{a}$teaux differentiable function. The modulus of total convexity of $f$ at $x\in \textrm{int dom} f$ is the function $v_{f}(x,\cdot):[0,+\infty)\to[0,+\infty]$ define by
$$v_{f}(x,t):=\inf\{D_{f}(y,x):y\in\textrm{dom} f,||y-x||=t\}.$$
The function $f$ is called totally convex at $x$ if $v_{f}(x,t)>0$ whenever $t>0$. The function $f$ is called totally convex if it is totally convex at any point $x\in \textrm{int dom} f$ and is said to be totally convex on bounded sets if $v_{f}(B,t)>0$ for any nonempty bounded subset $B$ of $E$ and $t>0$, where the modulus of total convexity of the function $f$ on the set $B$ is the function $v_{f}:\textrm{int dom} f\times [0,+\infty)\to[0,+\infty]$ defined by $$v_{f}(B,t):=\inf\{v_{f}(x,t):x\in B\cap\textrm{dom} f\}.$$
\begin{lemma}\cite{res}\label{le3} If $x\in\textrm{dom} f$, then the following statements are equivalent:
\begin{itemize}
\item [(i)] The function $f$ is totally convex at $x$;
\item [(ii)] For any sequence $\{y_n\}\subset\textrm{dom} f$, $$\underset{n\to+\infty}{\lim}D_{f}(y_n,x)=0\Rightarrow\underset{n\to+\infty}{\lim}||y_n-x||=0.$$
    \end{itemize}\end{lemma}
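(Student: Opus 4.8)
The plan is to prove both implications directly from the definition of the modulus of total convexity $v_f(x,\cdot)$, the only substantial ingredient being a scaling/monotonicity property of the map $t\mapsto v_f(x,t)/t$. First I would establish the auxiliary inequality
\[
\frac{v_f(x,s)}{s}\le\frac{v_f(x,t)}{t}\qquad(0<s\le t).
\]
To prove it, fix any $y\in\textrm{dom}\,f$ with $\|y-x\|=t$ and set $z:=(1-\tfrac{s}{t})x+\tfrac{s}{t}y$. Since $\textrm{dom}\,f$ is convex, $z\in\textrm{dom}\,f$, and clearly $\|z-x\|=\tfrac{s}{t}\|y-x\|=s$. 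Because $D_f(\cdot,x)$ is convex in its first argument (it is $f$ minus an affine functional) and $D_f(x,x)=0$, we get $D_f(z,x)\le\tfrac{s}{t}D_f(y,x)$, hence $v_f(x,s)\le D_f(z,x)\le\tfrac{s}{t}D_f(y,x)$; taking the infimum over all admissible $y$ gives $v_f(x,s)\le\tfrac{s}{t}v_f(x,t)$, which is the claim. In particular, if $\|y-x\|\ge\varepsilon>0$ then $D_f(y,x)\ge v_f(x,\|y-x\|)\ge\tfrac{\|y-x\|}{\varepsilon}\,v_f(x,\varepsilon)\ge v_f(x,\varepsilon)$.

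For $(i)\Rightarrow(ii)$: assume $f$ is totally convex at $x$, so $v_f(x,t)>0$ for every $t>0$, and let $\{y_n\}\subset\textrm{dom}\,f$ satisfy $D_f(y_n,x)\to0$. If $\|y_n-x\|\not\to0$, then along some subsequence $\|y_{n_k}-x\|\ge\varepsilon$ for a fixed $\varepsilon>0$, and the last displayed inequality forces $D_f(y_{n_k},x)\ge v_f(x,\varepsilon)>0$ for all $k$, contradicting $D_f(y_n,x)\to0$. Hence $\|y_n-x\|\to0$.

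For $(ii)\Rightarrow(i)$ I would argue by contraposition: if $f$ is not totally convex at $x$, there is $t_0>0$ with $v_f(x,t_0)=0$; by the definition of the infimum there is a sequence $\{y_n\}\subset\textrm{dom}\,f$ with $\|y_n-x\|=t_0$ and $D_f(y_n,x)\to0$, while $\|y_n-x\|=t_0\not\to0$, so (ii) fails. The only delicate point in the whole argument is the scaling inequality for $v_f(x,\cdot)$; the rest is bookkeeping with the definition. I do not anticipate a serious obstacle — the one thing to verify carefully is that the intermediate point $z$ lies in $\textrm{dom}\,f$ (so that $D_f(z,x)$ is defined), which follows from convexity of $\textrm{dom}\,f$, together with the implicit assumption $x\in\textrm{int dom}\,f$ needed for $\nabla f(x)$ to exist.
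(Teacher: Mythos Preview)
Your argument is correct. The paper does not actually prove this lemma; it merely cites it from \cite{res} (Resmerita), so there is no in-paper proof to compare against. What you have written is essentially the standard proof from that reference: the scaling inequality $v_f(x,s)/s\le v_f(x,t)/t$ for $0<s\le t$ (equivalently, $v_f(x,ct)\ge c\,v_f(x,t)$ for $c\ge1$) is exactly the key ingredient used there, and both implications then follow as you describe. The only small caveat you already flag --- that $x$ must lie in $\textrm{int dom}\,f$ for $\nabla f(x)$ and hence $D_f(\cdot,x)$ to be defined --- is indeed implicit throughout the paper's use of $D_f$.
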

    Recall that the function $f$ called sequentially consistent \cite{but} if for any two sequence $\{x_n\}$ and $\{y_n\}$ in $E$ such that the first one is bounded
     $$\underset{n\to+\infty}{\lim}D_{f}(y_n,x_n)=0\Rightarrow\underset{n\to+\infty}{\lim}||y_n-x_n||=0.$$
\begin{lemma}\cite{but1}\label{le4} The function $f$ is totally convex on bounded sets if and only if the function $f$ is sequentially consistent.
\end{lemma}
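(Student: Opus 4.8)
The plan is to prove the two implications separately, each by contradiction; the only substantive ingredient is that $D_{f}(\cdot,x)$ is convex in its first argument — because $D_{f}(y,x)=f(y)-f(x)-\langle\nabla f(x),y-x\rangle$ differs from the convex function $f$ only by a term that is affine in $y$ — together with a scaling estimate for the modulus of total convexity.

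First I would record the estimate: if $x\in\textrm{int dom}\,f$, $y\in\textrm{dom}\,f$ and $0<t\leq s:=\|y-x\|$, then
\[
D_{f}(y,x)\geq\frac{s}{t}\,v_{f}(x,t)\geq v_{f}(x,t).
\]
To see this, put $z:=x+\tfrac{t}{s}(y-x)=(1-\tfrac{t}{s})x+\tfrac{t}{s}y$; since $\textrm{dom}\,f$ is convex we have $z\in[x,y]\subset\textrm{dom}\,f$, and $\|z-x\|=t$. Convexity of $D_{f}(\cdot,x)$ together with $D_{f}(x,x)=0$ gives $D_{f}(z,x)\leq\tfrac{t}{s}D_{f}(y,x)$, while $v_{f}(x,t)\leq D_{f}(z,x)$ by definition; the estimate follows. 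In particular $D_{f}(y,x)\geq v_{f}(x,t)$ whenever $\|y-x\|\geq t$.

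For the implication ``totally convex on bounded sets $\Rightarrow$ sequentially consistent'', take $\{x_{n}\}$ bounded and $\{y_{n}\}\subset E$ with $D_{f}(y_{n},x_{n})\to0$, and suppose toward a contradiction that $\|y_{n}-x_{n}\|$ does not tend to $0$. Passing to a subsequence, $\|y_{n_{k}}-x_{n_{k}}\|\geq\varepsilon$ for some $\varepsilon>0$ and all $k$; each $x_{n}$ lies in $\textrm{int dom}\,f$ (so that $D_{f}(y_{n},x_{n})$ is defined), hence $\{x_{n}\}\subset B$ for some bounded set $B$ with $x_{n}\in B\cap\textrm{dom}\,f$. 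The estimate above yields $D_{f}(y_{n_{k}},x_{n_{k}})\geq v_{f}(x_{n_{k}},\varepsilon)\geq v_{f}(B,\varepsilon)>0$, the strict inequality because $f$ is totally convex on bounded sets; this contradicts $D_{f}(y_{n},x_{n})\to0$.

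For the converse, suppose $f$ is not totally convex on bounded sets: there are a bounded set $B$ and $t>0$ with $v_{f}(B,t)=0$, hence a sequence $x_{n}\in B\cap\textrm{dom}\,f$ with $v_{f}(x_{n},t)\to0$. For each such $x_{n}$ the set $\{y\in\textrm{dom}\,f:\|y-x_{n}\|=t\}$ is nonempty (otherwise $v_{f}(x_{n},t)=+\infty$), so we may choose $y_{n}$ with $\|y_{n}-x_{n}\|=t$ and $D_{f}(y_{n},x_{n})\leq v_{f}(x_{n},t)+\tfrac1n$. Then $\{x_{n}\}$ is bounded and $D_{f}(y_{n},x_{n})\to0$, so sequential consistency forces $\|y_{n}-x_{n}\|\to0$, contradicting $\|y_{n}-x_{n}\|=t>0$. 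I expect the only delicate point to be the scaling estimate for $v_{f}$ together with the bookkeeping of the degenerate cases (empty constraint set, or $y\notin\textrm{dom}\,f$), where the convention $v_{f}=+\infty$ is to be read as ``total convexity holds there''; granting that, both directions are immediate from the definitions.
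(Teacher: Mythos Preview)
The paper does not prove this lemma; it is stated with a citation to Butnariu and Iusem \cite{but1} and no argument is supplied. Your proof is correct and is in fact the standard one from that reference: the key scaling estimate $D_{f}(y,x)\geq\tfrac{\|y-x\|}{t}\,v_{f}(x,t)$ for $0<t\leq\|y-x\|$, obtained from the convexity of $D_{f}(\cdot,x)$ in its first variable, is exactly the tool used there, and both implications then follow by contradiction precisely as you describe.
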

\begin{lemma}\cite{reich1} \label{le5}Let $f:E\to\mathbb{R}$ be a G$\hat{a}$teaux differentiable and totally convex function. If $x_0\in E$ and the sequence $\{D_{f}(x_n,x_0)\}$ is bounded, then the sequence $\{x_{n}\}$ is bounded too.
\end{lemma}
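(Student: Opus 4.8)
\emph{Proof proposal.} The plan is to argue by contradiction, exploiting the modulus of total convexity $v_{f}(x_{0},\cdot)$ and the trivial lower bound $D_{f}(y,x_{0})\ge v_{f}(x_{0},\|y-x_{0}\|)$ that holds for every $y\in\mathrm{dom}\,f$. Note first that, since $f$ is real-valued, $\mathrm{dom}\,f=E=\mathrm{int\,dom}\,f$, so $x_{0}$ and all the $x_{n}$ lie in $\mathrm{int\,dom}\,f$ and the modulus $v_{f}(x_{0},\cdot)$ is taken at an interior point; moreover total convexity of $f$ at $x_{0}$ gives $v_{f}(x_{0},t)>0$ for every $t>0$. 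If $\{x_{n}\}$ were unbounded we could extract a subsequence, still denoted $\{x_{n}\}$, with $\|x_{n}-x_{0}\|\to\infty$, and then $D_{f}(x_{n},x_{0})\ge v_{f}(x_{0},\|x_{n}-x_{0}\|)$; so it suffices to know that $v_{f}(x_{0},t)\to\infty$ as $t\to\infty$, which contradicts the boundedness of $\{D_{f}(x_{n},x_{0})\}$.

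The key step, therefore, is the scaling property of the modulus of total convexity: for all $t\ge 0$ and all $\lambda\ge 1$,
$$v_{f}(x_{0},\lambda t)\ \ge\ \lambda\, v_{f}(x_{0},t).$$
I would prove this directly from convexity of $f$: fix any $y$ with $\|y-x_{0}\|=\lambda t$ and set $z:=x_{0}+\tfrac{1}{\lambda}(y-x_{0})$, so that $\|z-x_{0}\|=t$ and $z=(1-\tfrac{1}{\lambda})x_{0}+\tfrac{1}{\lambda}y$. Convexity gives $f(z)-f(x_{0})\le\tfrac{1}{\lambda}\big(f(y)-f(x_{0})\big)$, while $\langle\nabla f(x_{0}),z-x_{0}\rangle=\tfrac{1}{\lambda}\langle\nabla f(x_{0}),y-x_{0}\rangle$; subtracting yields $D_{f}(z,x_{0})\le\tfrac{1}{\lambda}D_{f}(y,x_{0})$, i.e. $D_{f}(y,x_{0})\ge\lambda D_{f}(z,x_{0})\ge\lambda v_{f}(x_{0},t)$. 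Taking the infimum over all such $y$ gives the displayed inequality. (This is exactly the property of $v_{f}$ used in \cite{reich1} and the related literature, so one may also simply invoke it.)

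To finish, put $c:=v_{f}(x_{0},1)>0$ and assume, for contradiction, that $\{x_{n}\}$ is unbounded; pass to a subsequence with $\lambda_{n}:=\|x_{n}-x_{0}\|\ge 1$ and $\lambda_{n}\to\infty$. Applying the scaling property with $t=1$,
$$D_{f}(x_{n},x_{0})\ \ge\ v_{f}(x_{0},\lambda_{n})\ \ge\ \lambda_{n}\,v_{f}(x_{0},1)\ =\ c\,\|x_{n}-x_{0}\|\ \longrightarrow\ \infty,$$
contradicting the boundedness of $\{D_{f}(x_{n},x_{0})\}$. Hence $\{x_{n}\}$ is bounded. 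The only non-routine ingredient is the scaling inequality for $v_{f}(x_{0},\cdot)$ — and, behind it, the observation that total convexity at a point forces linear (hence unbounded) growth of the modulus — so I expect that to be the main point to pin down; the rest is bookkeeping with the definitions, using crucially that $f$ is real-valued so that the whole argument takes place at an interior point of $\mathrm{dom}\,f$.
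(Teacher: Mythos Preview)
Your argument is correct. The scaling inequality $v_{f}(x_{0},\lambda t)\ge\lambda\,v_{f}(x_{0},t)$ for $\lambda\ge 1$ is derived exactly as it should be---convexity of $f$ gives $D_{f}(z,x_{0})\le\tfrac{1}{\lambda}D_{f}(y,x_{0})$ for the rescaled point $z$, and then you take the infimum---and from it the boundedness follows immediately since $v_{f}(x_{0},1)>0$ by total convexity at $x_{0}$.

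As for comparison with the paper: the paper does \emph{not} supply a proof of this lemma. It is quoted from Reich and Sabach \cite{reich1} and used as a black box. Your argument is precisely the standard proof behind that citation (the superadditivity of $t\mapsto v_{f}(x_{0},t)$ goes back to Butnariu--Iusem \cite{but1} and is the usual route in the literature). So there is no alternative approach in the paper to contrast yours with; you have simply reconstructed the cited result.
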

\begin{lemma}\cite{reich1}\label{le6} Let $f:E\to\mathbb{R}$ be a G$\hat{a}$teaux differentiable and totally convex function, $x_0\in E$ and let $C$ be a nonempty, closed and convex subset of $E$. Suppose that the sequence $\{x_n\}$ is bounded and any weak subsequential limit of $\{x_n\}$ belongs to $C$. If $D_{f}(x_n,x_0)\leq D_{f}(P^{f}_{C}(x_0),x_0)$ for any $n\in\mathbb{R}$, then $\{x_n\}$ converges strongly to $P^{f}_{C}(x_0)$.\end{lemma}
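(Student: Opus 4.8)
The plan is to pin down the weak limit of $\{x_n\}$ first, and then upgrade weak convergence to strong convergence by means of the three--point identity \eqref{eq2} together with total convexity (Lemma \ref{le3}). Write $u:=P^{f}_{C}(x_{0})$, so that $u$ is the unique minimiser of $D_{f}(\cdot,x_{0})$ over $C$. Note that the function $y\mapsto D_{f}(y,x_{0})=f(y)-f(x_{0})-\langle\nabla f(x_{0}),y-x_{0}\rangle$ is convex and lower semicontinuous (the affine and constant terms are weakly continuous and $f$ is lsc convex, hence weakly lsc), so $D_{f}(\cdot,x_{0})$ is weakly lower semicontinuous; this is the one structural fact the whole argument rests on.

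\emph{Step 1: $x_{n}\rightharpoonup u$.} Since $E$ is reflexive and $\{x_{n}\}$ is bounded, it suffices to show that every weakly convergent subsequence $x_{n_{k}}\rightharpoonup v$ satisfies $v=u$. By hypothesis $v\in C$. Weak lower semicontinuity of $D_{f}(\cdot,x_{0})$ and the standing assumption $D_{f}(x_{n},x_{0})\le D_{f}(u,x_{0})$ give
$$D_{f}(v,x_{0})\le\liminf_{k\to\infty}D_{f}(x_{n_{k}},x_{0})\le D_{f}(u,x_{0}).$$
Since $u$ minimises $D_{f}(\cdot,x_{0})$ over $C$ and $v\in C$, also $D_{f}(u,x_{0})\le D_{f}(v,x_{0})$, whence $D_{f}(v,x_{0})=D_{f}(u,x_{0})$. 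The Bregman projection onto $C$ is single--valued (this is where total convexity of $f$ is used), so $v=u$. As this holds for every weak cluster point of the bounded sequence $\{x_{n}\}$, we conclude $x_{n}\rightharpoonup u$.

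\emph{Step 2: $D_{f}(x_{n},x_{0})\to D_{f}(u,x_{0})$.} Applying weak lower semicontinuity now to the whole sequence yields $D_{f}(u,x_{0})\le\liminf_{n\to\infty}D_{f}(x_{n},x_{0})$, while the hypothesis gives $\limsup_{n\to\infty}D_{f}(x_{n},x_{0})\le D_{f}(u,x_{0})$; hence the limit exists and equals $D_{f}(u,x_{0})$.

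\emph{Step 3: strong convergence.} Using \eqref{eq2} with $z=x_{n}$, $y=u$, $x=x_{0}$,
$$D_{f}(x_{n},u)=D_{f}(x_{n},x_{0})-D_{f}(u,x_{0})-\langle\nabla f(u)-\nabla f(x_{0}),x_{n}-u\rangle.$$
As $n\to\infty$ the first difference tends to $0$ by Step 2, and the inner product tends to $0$ because $x_{n}\rightharpoonup u$ while $\nabla f(u)-\nabla f(x_{0})$ is a fixed element of $E^{*}$. Therefore $D_{f}(x_{n},u)\to0$, and since $f$ is totally convex, Lemma \ref{le3} gives $\|x_{n}-u\|\to0$, i.e. $\{x_{n}\}$ converges strongly to $P^{f}_{C}(x_{0})$. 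The only genuinely delicate point is Step 1 — identifying the weak limit — for which one needs both the weak lower semicontinuity of $D_{f}(\cdot,x_{0})$ and uniqueness of the Bregman projection; everything after that is a routine manipulation of the three--point identity.
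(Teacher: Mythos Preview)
The paper does not supply its own proof of this lemma; it merely quotes the result from \cite{reich1}. So there is nothing in the present paper to compare against directly. That said, your argument is correct and is in fact the standard route taken in the original source: identify every weak cluster point with $u=P^{f}_{C}(x_{0})$ via weak lower semicontinuity of $D_{f}(\cdot,x_{0})$ and uniqueness of the Bregman projection, deduce $D_{f}(x_{n},x_{0})\to D_{f}(u,x_{0})$, and then use the three--point identity together with total convexity at $u$ (Lemma~\ref{le3}) to upgrade to norm convergence.

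Two minor remarks on the hypotheses you invoke implicitly. First, reflexivity of $E$ (needed in Step~1 to extract weakly convergent subsequences) and lower semicontinuity of $f$ (needed for weak lower semicontinuity of $D_{f}(\cdot,x_{0})$) are standing assumptions in the paper's preliminaries, so appealing to them is legitimate even though they are not repeated in the lemma statement. Second, your use of Lemma~\ref{le3} in Step~3 requires only total convexity of $f$ at the single point $u$, which is of course covered by the blanket hypothesis that $f$ is totally convex. With these points noted, the proof is complete and matches the approach in \cite{reich1}.
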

\begin{lemma}\cite{phe}\label{le7} Let $E$ be a real reflexive Banach space, $f:E\to(-\infty,+\infty]$ be a proper lower semi-continuous function, then $f^*:E^*\to(-\infty,+\infty]$ is a proper weak$^*$ lower semi-continuous and convex function. Thus, for all $z\in E,$ we have
\begin{eqnarray}\label{d}
D_{f}(z,\nabla f^*(\sum^{N}_{i=1}t_i\nabla f(x_i)))\leq\sum^{N}_{i=1}t_iD_{f}(z,x_i)
\end{eqnarray}\end{lemma}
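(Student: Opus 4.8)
The plan is to treat the two assertions separately: the structural properties of $f^{*}$ are a recollection of standard facts about the Fenchel conjugate, while the inequality \eqref{d} is a short computation resting on Lemma \ref{jen3} and the standing Legendre hypothesis.

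For the first assertion, I would observe that $f^{*}(x^{*})=\sup_{x\in E}\{\langle x^{*},x\rangle-f(x)\}$ is a pointwise supremum of the affine, weak$^{*}$-continuous functionals $x^{*}\mapsto\langle x^{*},x\rangle-f(x)$, one for each $x\in\textrm{dom}\,f$; hence $f^{*}$ is convex and weak$^{*}$ lower semi-continuous. It is proper because $f$, being proper, convex and lower semi-continuous on the reflexive space $E$, admits a continuous affine minorant $x\mapsto\langle x^{*}_{0},x\rangle-\alpha$ by the Hahn--Banach theorem, so that $f^{*}(x^{*}_{0})\le\alpha<+\infty$, while $f^{*}(x^{*})>-\infty$ for every $x^{*}$ since $f\not\equiv+\infty$. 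This is exactly the statement borrowed from \cite{phe}.

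For the inequality, I would fix $z\in E$, coefficients $t_{1},\dots,t_{N}\ge 0$ with $\sum_{i=1}^{N}t_{i}=1$, and points $x_{1},\dots,x_{N}\in\textrm{int dom}\,f$, and set $x^{*}:=\sum_{i=1}^{N}t_{i}\nabla f(x_{i})$. Each $\nabla f(x_{i})$ lies in $\textrm{ran}\,\nabla f=\textrm{int dom}\,f^{*}$ (Remark, part (iv)), and $\textrm{int dom}\,f^{*}$ is convex, so $x^{*}\in\textrm{int dom}\,f^{*}$ and $\nabla f^{*}(x^{*})$ is well defined. By Lemma \ref{jen3}(1),
$$D_{f}\big(z,\nabla f^{*}(x^{*})\big)=V(z,x^{*})=f(z)-\langle z,x^{*}\rangle+f^{*}(x^{*}).$$
I would then bound $f^{*}(x^{*})\le\sum_{i=1}^{N}t_{i}f^{*}(\nabla f(x_{i}))$ by convexity of $f^{*}$, and use that under the Legendre assumption the Young--Fenchel inequality is attained along the gradient, namely $f^{*}(\nabla f(x_{i}))=\langle\nabla f(x_{i}),x_{i}\rangle-f(x_{i})$. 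Substituting, and rewriting $\langle z,x^{*}\rangle=\sum_{i}t_{i}\langle\nabla f(x_{i}),z\rangle$ together with $f(z)=\sum_{i}t_{i}f(z)$, the right-hand side regroups termwise into
$$\sum_{i=1}^{N}t_{i}\big[f(z)-f(x_{i})-\langle\nabla f(x_{i}),z-x_{i}\rangle\big]=\sum_{i=1}^{N}t_{i}D_{f}(z,x_{i}),$$
which is \eqref{d}.

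I do not anticipate a serious obstacle. The only two points demanding care are (a) verifying that $x^{*}$ lands in $\textrm{int dom}\,f^{*}$, so that $\nabla f^{*}(x^{*})$ is meaningful, which follows from convexity of $\textrm{int dom}\,f^{*}$ and part (iv) of the Remark; and (b) the equality $f^{*}(\nabla f(x))=\langle\nabla f(x),x\rangle-f(x)$ for $x\in\textrm{int dom}\,f$, which is the equality case of the Young--Fenchel inequality, valid because $\nabla f(x)\in\partial f(x)$ under the Legendre hypothesis. Everything else is bookkeeping with the relation $\sum_{i}t_{i}=1$.
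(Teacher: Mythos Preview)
The paper does not supply its own proof of this lemma; it is simply quoted from \cite{phe} and stated without argument, so there is nothing to compare your attempt against. Your sketch is correct and follows the standard route: the structural claims about $f^{*}$ are the classical supremum-of-affines argument, and for inequality \eqref{d} you correctly invoke Lemma \ref{jen3}(1) to write $D_{f}(z,\nabla f^{*}(x^{*}))=V(z,x^{*})$, apply convexity of $f^{*}$ to the convex combination $x^{*}=\sum_{i}t_{i}\nabla f(x_{i})$, use the equality case $f^{*}(\nabla f(x_{i}))=\langle\nabla f(x_{i}),x_{i}\rangle-f(x_{i})$ guaranteed by the Legendre hypothesis, and regroup. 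The two caveats you flag---that $x^{*}\in\textrm{int dom}\,f^{*}$ and that Young--Fenchel is attained at gradient points---are exactly the right places to be careful, and both are handled by the standing Legendre assumption as you note.
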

In order to solve the equilibrium problem, let us assume that a bifunction $g:C\times C\to\mathbb{R}$ satisfies the following condition \cite{blum}
\begin{itemize}
\item [(A1)] $g(x,x)=0,~\forall x\in C.$
\item [(A2)] $g$ is monotone, i.e., $g(x,y)+g(y,x)\leq0,~\forall x,y\in C.$
\item [(A3)] $\limsup_{t\downarrow0} g(x+t(z-x),y)\leq g(x,y)~\forall x,z,y\in C.$
\item [(A4)] The function $y\mapsto g(x,y)$ is convex and lower semi-continuous.
\end{itemize}
The resolvent of a bifunction $g$ \cite{com} is the operator $Res^{f}_{g}:E\to2^{C}$ defined by
\begin{eqnarray}\label{bi}Res^{f}_{g}(x)=\{z\in C:g(z,y)+\langle\nabla f(z)-\nabla f(x),y-z\rangle\geq0,~\forall y\in C\}.\end{eqnarray}
From (Lemma 1, in \cite{reich2}), if $f:(-\infty,+\infty]$ is a strongly coercive and G$\hat{a}$teaux differentiable function, and $g$ satisfies conditions (A1)-(A4), then dom$(Res^{f}_{g})=E$. The following lemma gives some characterization of the resolvent $Res^{f}_{g}$.
\begin{lemma}\cite{reich2}\label{le8} Let $E$ be a real reflexive Banach space and $C$ be a nonempty closed convex subset of $E$. Let $f:E\to(-\infty,+\infty]$ be a Legendre function. If the bifunction $g:C\times C\to\mathbb{R}$ satisfies the conditions (A1)-(A4). Then, the followings hold:
\begin{itemize}
\item[(i)] $Res^{f}_{g}$ is single-valued;
\item [(ii)] $Res^{f}_{g}$ is a Bregman firmly nonexpansive operator;
\item[(iii)] $F(Res^{f}_{g})=EP(g)$;
\item [(iv)] $EP(g)$ is closed and convex subset of $C$;
\item [(v)] for all $x\in E$ and for all $q\in F(Res^{f}_{g})$, we have
\begin{eqnarray}\label{11}
D_{f}(q,Res^{f}_{g}(x))+D_{f}(Res^{f}_{g}(x),x)\leq D_{f}(q,x).
\end{eqnarray}
\end{itemize}
\end{lemma}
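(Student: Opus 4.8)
The plan is to separate the five assertions into two groups. Assertions (i), (ii), (iii) and (v) will all come from one algebraic manoeuvre: write down two instances of the variational inequality defining $Res^f_g$ in \eqref{bi}, add them, and cancel the bifunction terms using monotonicity (A2). The only extra ingredient is that $f$, being Legendre, is essentially strictly convex and hence strictly convex on convex subsets of $\textrm{dom}\,\partial f$, so $\nabla f$ is strictly monotone there. Assertion (iv) will be handled separately, using (A1), (A3) and (A4).

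For (i) and (iii): if $z_1,z_2\in Res^f_g(x)$, I would substitute $y=z_2$ in the inequality for $z_1$ and $y=z_1$ in the inequality for $z_2$, add them, and use $g(z_1,z_2)+g(z_2,z_1)\le0$ to reach $\langle\nabla f(z_1)-\nabla f(z_2),z_1-z_2\rangle\le0$; strict monotonicity of $\nabla f$ then forces $z_1=z_2$, proving (i). For (iii), $z\in F(Res^f_g)$ means $z=Res^f_g(z)$, and setting $x=z$ in \eqref{bi} kills the duality pairing, so this is precisely $g(z,y)\ge0$ for all $y\in C$, i.e.\ $z\in EP(g)$; conversely, any $z\in EP(g)$ satisfies \eqref{bi} with $x=z$, so $z\in Res^f_g(z)$, and single-valuedness upgrades this to $z=Res^f_g(z)$.

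For (ii) and (v): I would set $u=Res^f_g(x)$, $v=Res^f_g(y)$, substitute $y\mapsto v$ in the inequality for $u$ and $y\mapsto u$ in the inequality for $v$, add, and cancel the $g$-terms by (A2); rearranging the surviving pairings gives $\langle\nabla f(u)-\nabla f(v),u-v\rangle\le\langle\nabla f(x)-\nabla f(y),u-v\rangle$, which is the defining inequality for a Bregman firmly nonexpansive operator (its equivalent $D_f$-form being a direct consequence of the three-point identity \eqref{eq2}). For (v) I would instead take just one instance, with $y=q\in F(Res^f_g)=EP(g)$: since $g(q,u)\ge0$, (A2) gives $g(u,q)\le0$, so \eqref{bi} yields $\langle\nabla f(u)-\nabla f(x),q-u\rangle\ge-g(u,q)\ge0$; combining this with the specialization $z=q$, $y=u$ of \eqref{eq2}, namely $D_f(q,x)=D_f(q,u)+D_f(u,x)+\langle\nabla f(u)-\nabla f(x),q-u\rangle$, delivers \eqref{11}.

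For (iv): I would first prove the reformulation $EP(g)=\{z\in C:g(w,z)\le0\ \forall w\in C\}$. The inclusion ``$\subseteq$'' is immediate from (A2). For ``$\supseteq$'', take $z$ in the right-hand set and $y\in C$, set $z_t=ty+(1-t)z\in C$; then $g(z_t,z_t)=0$ by (A1), while convexity of $g(z_t,\cdot)$ from (A4) gives $0=g(z_t,z_t)\le tg(z_t,y)+(1-t)g(z_t,z)\le tg(z_t,y)$ (the last step since $g(z_t,z)\le0$), hence $g(z_t,y)\ge0$, and letting $t\downarrow0$ with (A3) yields $g(z,y)\ge0$. Writing $EP(g)=\bigcap_{w\in C}\{z\in C:g(w,z)\le0\}$ and using that $z\mapsto g(w,z)$ is convex and lower semicontinuous by (A4), each set in the intersection is closed and convex, hence so is $EP(g)$. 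I expect this reformulation step, the only place where (A3) enters, to be the main obstacle; everything else is direct manipulation of (A2) and of the identity \eqref{eq2}.
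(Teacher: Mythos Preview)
Your argument is correct in all five parts: the monotonicity trick with (A2) gives (i), (ii), (v) exactly as you outline, the fixed-point characterization (iii) is immediate once (i) is known, and the Minty-type reformulation $EP(g)=\bigcap_{w\in C}\{z\in C:g(w,z)\le0\}$ together with (A4) yields (iv). One small remark on (iv): after you obtain $g(z_t,y)\ge0$ for each $t>0$, you should phrase the limiting step as $0\le\liminf_{t\downarrow0}g(z_t,y)\le\limsup_{t\downarrow0}g(z_t,y)\le g(z,y)$, since (A3) only controls the $\limsup$.

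As for comparison with the paper: there is nothing to compare against. Lemma~\ref{le8} is stated with the citation \cite{reich2} and no proof is given; it is imported from Reich and Sabach as a preliminary result. Your write-up is essentially the standard proof one finds in that source (and in the earlier Hilbert-space version of Combettes and Hirstoaga \cite{com}), so you have not taken a different route, you have simply supplied the omitted details.
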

\begin{lemma}(\cite{xu6})\label{xu6} Let $\{a_n\}$ be a sequence of nonnegative real numbers satisfying the following relation:
$$a_{n+1}\leq(1-\alpha_{n})a_{n}+\alpha_{n}\delta_{n},~~n\geq n_0,$$
where $\{\alpha_{n}\}\subset (0,1)$ and $\{\delta_{n}\}$ is a real sequence satisfying the following conditions:
$$\underset{n\to\infty}{\lim}\alpha_{n}=0,\sum^{\infty}_{n=1}=\infty,~~\textrm{as}~~\underset{n\to\infty}{\limsup}~\delta_{n}\leq0.$$
Then, $\underset{n\to\infty}{\lim}a_{n}=0$.
\end{lemma}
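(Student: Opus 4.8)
The plan is to run the standard $\varepsilon$-argument that converts the recursion into a pure geometric-type decay. Fix $\varepsilon>0$. Since $\limsup_{n\to\infty}\delta_n\le 0$, there is an index $N_1\ge n_0$ with $\delta_n\le\varepsilon$ for all $n\ge N_1$, so the hypothesis gives $a_{n+1}\le(1-\alpha_n)a_n+\alpha_n\varepsilon$ for $n\ge N_1$. Subtracting $\varepsilon$ from both sides yields the cleaner inequality
$a_{n+1}-\varepsilon\le(1-\alpha_n)(a_n-\varepsilon)$, valid for all $n\ge N_1$.

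Next I would iterate this last inequality from $N_1$ to $m-1$ to obtain
$a_m-\varepsilon\le\Bigl(\prod_{k=N_1}^{m-1}(1-\alpha_k)\Bigr)(a_{N_1}-\varepsilon)$
for every $m>N_1$; this is a one-line induction, using only that $\alpha_k\in(0,1)$ so each factor $1-\alpha_k$ is positive (and multiplication by a positive number preserves the inequality). The remaining ingredient is the elementary estimate $\prod_{k=N_1}^{m-1}(1-\alpha_k)\le\exp\bigl(-\sum_{k=N_1}^{m-1}\alpha_k\bigr)$, which together with the hypothesis $\sum_{n=1}^{\infty}\alpha_n=\infty$ forces $\prod_{k=N_1}^{m-1}(1-\alpha_k)\to 0$ as $m\to\infty$.

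Finally I would distinguish the sign of $a_{N_1}-\varepsilon$: if $a_{N_1}-\varepsilon\le 0$, the displayed bound already gives $a_m\le\varepsilon$ for all $m>N_1$ (the product being positive); if $a_{N_1}-\varepsilon>0$, the product tending to $0$ gives $\limsup_{m\to\infty}a_m\le\varepsilon$. In either case $\limsup_{m\to\infty}a_m\le\varepsilon$, and since $\varepsilon>0$ was arbitrary and $a_m\ge 0$, we conclude $\lim_{m\to\infty}a_m=0$. There is no serious obstacle here; the only point requiring a little care is that $a_{N_1}-\varepsilon$ may be negative, which is exactly why the small case split appears in the last step (equivalently, one observes that as soon as $a_k\le\varepsilon$ the recursion keeps every later term $\le\varepsilon$).
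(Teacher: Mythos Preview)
Your argument is correct and is precisely the standard $\varepsilon$--product argument for this well-known lemma of Xu. Note, however, that the paper does not supply its own proof of this statement: it is quoted from \cite{xu6} as a preliminary tool, so there is nothing in the paper to compare against beyond the citation itself. Your proof would serve perfectly well as a self-contained verification; the only cosmetic remark is that the case split at the end is unnecessary, since the iterated inequality $a_m-\varepsilon\le\bigl(\prod_{k=N_1}^{m-1}(1-\alpha_k)\bigr)(a_{N_1}-\varepsilon)$ already yields $\limsup_m a_m\le\varepsilon$ directly (the right-hand side is either nonpositive or tends to $0$), but this is a matter of taste rather than a gap.
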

\begin{lemma}(\cite{mainge})\label{mainge} Let $\{a_{n}\}$ be a sequence of real numbers such that there exists a subsequence $\{n_i\}$ of $\{n\}$ such that $a_{n_i}< a_{n_i+1}$ for all $i\in\mathbb{N}$. Then there exists a nondecreasing sequence $\{m_k\}\subset \mathbb{N}$ such that $m_{k}\to\infty$ and the following properties are satisfied by all (sufficiently large) numbers $k\in\mathbb{N}$.
$$a_{m_k}\leq a_{m_k+1}~~\textrm{and}~~a_k\leq a_{m_k+1}.$$
In fact, $m_{k}=\max\{j\leq k:a_j<a_{j+1}\}$.
\end{lemma}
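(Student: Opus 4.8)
The plan is to adopt the explicit formula $m_k := \max\{j \le k : a_j < a_{j+1}\}$ as the \emph{definition} of the sequence and then verify each asserted property in turn. The first thing to settle is that this maximum makes sense for all sufficiently large $k$: writing $S := \{j \in \mathbb{N} : a_j < a_{j+1}\}$, the hypothesis that the indices $n_i$ with $a_{n_i} < a_{n_i+1}$ form a subsequence means exactly that $S$ is infinite. Hence, letting $n_1$ be the least element of $S$, for every $k \ge n_1$ the set $S \cap \{1,\dots,k\}$ is nonempty and finite and therefore has a maximum; from here on I work only with $k \ge n_1$.

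Next I would check that $\{m_k\}_{k \ge n_1}$ is nondecreasing and tends to infinity. Monotonicity: if $n_1 \le k \le k'$ then $S \cap \{1,\dots,k\} \subseteq S \cap \{1,\dots,k'\}$, so $m_k = \max\bigl(S \cap \{1,\dots,k\}\bigr) \le \max\bigl(S \cap \{1,\dots,k'\}\bigr) = m_{k'}$. Divergence: given $N \in \mathbb{N}$, infinitude of $S$ gives some $n \in S$ with $n > N$, and then for every $k \ge n$ we have $n \in S \cap \{1,\dots,k\}$, whence $m_k \ge n > N$; thus $m_k \to \infty$.

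Finally I would establish the two inequalities. Since $m_k \in S$ by construction, $a_{m_k} < a_{m_k+1}$, which in particular yields $a_{m_k} \le a_{m_k+1}$. For the inequality $a_k \le a_{m_k+1}$, use the maximality of $m_k$: every index $j$ with $m_k < j \le k$ lies outside $S$, so $a_j \ge a_{j+1}$ for all such $j$, and chaining these relations gives
$$a_{m_k+1} \ge a_{m_k+2} \ge \cdots \ge a_k,$$
hence $a_k \le a_{m_k+1}$, valid whenever $m_k < k$. In the complementary case $m_k = k$ the claim reduces to $a_k \le a_{k+1}$, which holds because $k = m_k \in S$. This covers every case.

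I do not anticipate a genuine obstacle in this argument; the only point deserving a moment's care is the boundary case $m_k = k$, where the displayed chain of non-increasing inequalities is vacuous, together with the convention that ``sufficiently large $k$'' must be read as $k \ge n_1$ so that $m_k$ is even defined.
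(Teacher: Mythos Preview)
Your argument is correct and is the standard proof of Maing\'e's lemma: you take the explicit definition $m_k=\max\{j\le k: a_j<a_{j+1}\}$, check well-definedness, monotonicity, and divergence from the infinitude of $S$, and then derive both inequalities---the first from $m_k\in S$, the second by chaining $a_j\ge a_{j+1}$ over the gap $m_k<j\le k$, with the boundary case $m_k=k$ handled separately. The paper itself does not prove this lemma; it is quoted without proof from \cite{mainge}, so there is nothing to compare against beyond noting that your proof matches the usual one.
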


\section{Main Results}

We now prove the following theorem.
\begin{theorem} Let $C$ be a nonempty, closed and convex subset of a real reflexive Banach space $E$ and $f:E\to\mathbb{R}$ a strongly coercive Legendre function which is bounded, uniformly Fr$\acute{e}$chet differentiable and totally convex on bounded subset of $E$. For each $j=1,2,\cdots,m$, let $g_{j}$ be a bifunction from $C\times C$ to $\mathbb{R}$ satisfying (A1)-(A4) and let $\{T_{i=1}^{N}\}$ be a finite family of quasi-Bregman nonexpansive self mapping of $C$ such that
$F:=\cap^{N}_{i=1}F(T_{i})\neq\emptyset,$ where $F=F(T_{N}T_{N-1}T_{N-2}\cdots T_{2}T_{1})=F(T_{1}T_{N}T_{N-1}T_{N-2}\cdots T_{2})=\cdots =F(T_{N-1}T_{N-2}\cdots T_{2}T_{1}T_{N})\neq\emptyset$ and\\
$\Omega:=\Big(\cap^{m}_{j=1}EP(g_j)\Big)\bigcap F\neq\emptyset.$ Let $\{x_{n}\}^{\infty}_{n=1}$ be a sequence generated by $x_1 =x\in C,C_{1}=C$ and
\begin{eqnarray}\label{rec}
 {} \left\{ \begin{array}{ll}
 x_{1}\in C & \textrm{ $  $}
  \\ u_{j,n}=Res^{f}_{g_j}x_n,~~j=1,2,3,\cdots,m  & \textrm{ $  $}
 \\ y_{n}=P_{C}(\nabla f^*((1-\alpha_{n})\nabla f(u_{j,n})))  & \textrm{ $  $}
 \\ x_{n+1}=P_{C}(\nabla f^*(\beta_n\nabla f(y_{n})+(1-\beta_n)\nabla f(T_{[n]}y_{n})))
\end{array}  \right.
\end{eqnarray}
 where $T_{[n]}=T_{n(mod~N)}$ and $\{\alpha_{n}\}_{n=1}^{\infty}\subset (0,1)$, $\{\beta_{n}\}_{n=1}^{\infty}\subset [c,d]\subset(0,1)$ satisfying  $\underset{n\to\infty}{\lim}\alpha_n=0$, $\sum^{\infty}_{n=1}\alpha_{n}=\infty$. Then $\{x_{n}\}_{n=1}^{\infty}$ converges strongly to $P^{f}_{\Omega}(x)$, where $P^{f}_{\Omega}$ is the Bregman projection of $C$ onto $\Omega.$
\end{theorem}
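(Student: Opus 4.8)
The plan is to follow the standard four-part analysis of Halpern-type processes governed by Bregman distances (as in Reich--Sabach and Kumam et al.\ cited above); I read the $y_n$-step of \eqref{rec} as $y_n=P_C(\nabla f^*(\alpha_n\nabla f(x)+(1-\alpha_n)\nabla f(u_{j,n})))$, since the anchor term $\alpha_n\nabla f(x)$ is needed for the limit to be $P^f_\Omega(x)$.

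\emph{Step 1 (well-definedness and boundedness).} First, $\Omega$ is nonempty, closed and convex: each $EP(g_j)$ is closed and convex by Lemma~\ref{le8}(iv), each $F(T_i)$ is closed and convex because $T_i$ is quasi-Bregman nonexpansive and $f$ is Legendre (hence strictly convex), and $\Omega=(\cap_j EP(g_j))\cap F$; hence $p:=P^f_\Omega(x)$ is well defined. Fix $p\in\Omega$ and combine three elementary estimates: Lemma~\ref{le8}(v) gives $D_f(p,u_{j,n})\le D_f(p,x_n)-D_f(u_{j,n},x_n)$; Lemma~\ref{le7} applied to the convex combination defining $y_n$, together with Lemma~\ref{le2}(b) for $P_C$, gives $D_f(p,y_n)\le\alpha_n D_f(p,x)+(1-\alpha_n)D_f(p,u_{j,n})$; and Lemma~\ref{jen}(iii) applied to $\beta_n\nabla f(y_n)+(1-\beta_n)\nabla f(T_{[n]}y_n)$, with Lemma~\ref{jen3}(1), the quasi-Bregman nonexpansiveness of $T_{[n]}$ (note $p\in F\subseteq F(T_{[n]})$) and Lemma~\ref{le2}(b), gives $D_f(p,x_{n+1})\le D_f(p,y_n)-\beta_n(1-\beta_n)\rho^*_r(\|\nabla f(y_n)-\nabla f(T_{[n]}y_n)\|)$. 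Chaining these yields $D_f(p,x_{n+1})\le(1-\alpha_n)D_f(p,x_n)+\alpha_n D_f(p,x)$, whence by induction $D_f(p,x_n)\le\max\{D_f(p,x_1),D_f(p,x)\}$ for all $n$; by Lemma~\ref{le5} the sequence $\{x_n\}$ is bounded, and then so are $\{u_{j,n}\},\{y_n\},\{T_{[n]}y_n\}$ and their images under $\nabla f$ (since $f$ is bounded on bounded sets, $\nabla f$ is bounded on bounded sets by Lemma~\ref{le1}, and $\nabla f^*$ likewise by Theorem~\ref{jen2}).

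\emph{Step 2 (the key inequality).} The crux is extracting the Halpern term. Put $v_n=\nabla f^*(\alpha_n\nabla f(x)+(1-\alpha_n)\nabla f(u_{j,n}))$, so $y_n=P_C v_n$. Applying Lemma~\ref{jen3}(2) with $x^*=\alpha_n\nabla f(x)+(1-\alpha_n)\nabla f(u_{j,n})$ and $y^*=\alpha_n(\nabla f(p)-\nabla f(x))$ --- which replaces the anchor $\nabla f(x)$ by $\nabla f(p)$ --- and then using convexity of $V(p,\cdot)$ and $V(p,\nabla f(p))=0$, one gets $D_f(p,v_n)\le(1-\alpha_n)D_f(p,u_{j,n})+\alpha_n\langle\nabla f(x)-\nabla f(p),v_n-p\rangle$. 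Feeding in $D_f(p,y_n)\le D_f(p,v_n)$, the bound $D_f(p,u_{j,n})\le D_f(p,x_n)-D_f(u_{j,n},x_n)$, and the $\beta_n$-estimate of Step~1 yields, for all $n$,
\[
D_f(p,x_{n+1})\le(1-\alpha_n)D_f(p,x_n)+\alpha_n\langle\nabla f(x)-\nabla f(p),v_n-p\rangle-(1-\alpha_n)D_f(u_{j,n},x_n)-\beta_n(1-\beta_n)\rho^*_r\big(\|\nabla f(y_n)-\nabla f(T_{[n]}y_n)\|\big).
\]

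\emph{Step 3 (asymptotic regularity and conclusion).} Write $a_n=D_f(p,x_n)$ and $\delta_n=\langle\nabla f(x)-\nabla f(p),v_n-p\rangle$ (a bounded sequence). Apply Lemma~\ref{mainge}. If $\{a_n\}$ is eventually nonincreasing it converges, and $a_n-a_{n+1}\to0$ forces $D_f(u_{j,n},x_n)\to0$ and $\rho^*_r(\|\nabla f(y_n)-\nabla f(T_{[n]}y_n)\|)\to0$; in the non-monotone case the same conclusions hold along the Maingé indices $\{m_k\}$, and moreover $a_{m_k+1}\le\delta_{m_k}$, because $a_{m_k}\le a_{m_k+1}$ forces $(1-\alpha_{m_k})D_f(u_{j,m_k},x_{m_k})+\beta_{m_k}(1-\beta_{m_k})\rho^*_r(\cdots)\le\alpha_{m_k}\delta_{m_k}$. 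In either situation total convexity and sequential consistency (Lemmas~\ref{le3},~\ref{le4}) give $\|u_{j,n}-x_n\|\to0$; the properties of the gauge $\rho^*_r$ and the uniform continuity of $\nabla f^*$ on bounded sets give $\|y_n-T_{[n]}y_n\|\to0$; and $\alpha_n\to0$ with $\{\nabla f(u_{j,n})\}$ bounded gives $\|v_n-u_{j,n}\|\to0$, hence $\|x_n-y_n\|\to0$ and $\|x_{n+1}-x_n\|\to0$ (all along $\{m_k\}$ in the non-monotone case). Let $z$ be a weak cluster point of the subsequence $\{x_{n_k}\}$ realizing the relevant $\limsup_k\delta_{n_k}$. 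Feeding $u_{j,n}=Res^f_{g_j}x_n$ into \eqref{bi}, using $\|\nabla f(u_{j,n})-\nabla f(x_n)\|\to0$ (Lemma~\ref{le1}), monotonicity (A2), condition (A3) and (A4) exactly as in the proof of Lemma~\ref{le8}, gives $z\in EP(g_j)$ for each $j$; for each residue $i$, choosing $r_k\in\{0,\dots,N-1\}$ with $n_k+r_k\equiv i\pmod{N}$ and using $\|x_{n+1}-x_n\|\to0$ gives $x_{n_k+r_k}\rightharpoonup z$, $y_{n_k+r_k}\rightharpoonup z$, $\|y_{n_k+r_k}-T_iy_{n_k+r_k}\|\to0$, so $z\in\hat F(T_i)=F(T_i)$; thus $z\in\Omega$, and Lemma~\ref{le2}(a) gives $\langle\nabla f(x)-\nabla f(p),z-p\rangle\le0$, i.e.\ $\limsup_k\delta_{n_k}\le0$. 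In the monotone case Lemma~\ref{xu6} applied to $a_{n+1}\le(1-\alpha_n)a_n+\alpha_n\delta_n$ gives $a_n\to0$; in the non-monotone case $a_{m_k+1}\le\delta_{m_k}$ and $a_k\le a_{m_k+1}$ (Lemma~\ref{mainge}) give $a_k\to0$. Finally $D_f(p,x_n)\to0$ and sequential consistency (Lemma~\ref{le4}) give $x_n\to p=P^f_\Omega(x)$.

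\emph{Main obstacle.} I expect the hard part to be the fixed-point identification in Step~3: because the $T_i$ are applied cyclically through $T_{[n]}$ and are only quasi-Bregman nonexpansive, one controls only $\|y_n-T_{[n]}y_n\|$ (and, in the non-monotone branch, only along $\{m_k\}$), so placing a weak cluster point $z$ in the \emph{whole} set $F=\cap_i F(T_i)$ --- not just in a single $F(T_i)$ --- forces one to use the demiclosedness-type property $\hat F(T_i)=F(T_i)$ together with the cyclic-composition hypotheses $F=F(T_NT_{N-1}\cdots T_1)=\cdots$ and to arrange the index shifts so that \emph{every} residue class $i$ is reached along a subsequence that still converges weakly to $z$. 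A secondary delicate point --- where the Bregman setting genuinely departs from the Hilbert-space computation --- is the particular choice $y^*=\alpha_n(\nabla f(p)-\nabla f(x))$ in Lemma~\ref{jen3}(2) that makes the contraction factor $(1-\alpha_n)$ survive in Step~2.
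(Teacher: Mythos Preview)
Your outline---boundedness by induction, the Halpern-type estimate via Lemma~\ref{jen3}(2), the Maing\'e two-case split, and Lemma~\ref{xu6} to close---matches the paper's architecture. One cosmetic difference: the paper reads the $y_n$-step literally and treats the missing $\alpha_n$ weight as attached to $0$, so $D_f(p,0)$ and $\langle -\nabla f(p),z_n-p\rangle$ appear where you have $D_f(p,x)$ and $\langle\nabla f(x)-\nabla f(p),v_n-p\rangle$.

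The substantive gap is in your fixed-point identification. You place the weak cluster point $z$ in $F$ by shifting indices to obtain $\|y_{n_k+r_k}-T_iy_{n_k+r_k}\|\to0$ for every residue $i$ and then invoking $z\in\hat F(T_i)=F(T_i)$. But the equality $\hat F(T_i)=F(T_i)$ is exactly what distinguishes \emph{Bregman relatively nonexpansive} maps from \emph{quasi-Bregman nonexpansive} ones; under the stated hypotheses no demiclosedness principle for the individual $T_i$ is available, and your index-shifting yields only $z\in\hat F(T_i)$. You correctly flag this as the main obstacle in your last paragraph, but then resolve it by assuming the very property that is absent from the hypotheses; the cyclic-composition assumption $F=F(T_NT_{N-1}\cdots T_1)=\cdots$ is never actually used in your argument.

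The paper's route at this point is genuinely different, and it is precisely here that the composition hypothesis does work. After establishing $\|x_{n+1}-x_n\|\to0$ and $D_f(x_n,T_{[n+1]}x_n)\to0$, the paper runs a telescoping ``finite table'': from $D_f(x_{n+N},T_{(n+N)}x_{n+N-1})\to0$ and successive contraction-type estimates it obtains, line by line, $D_f(T_{(n+N)}\cdots T_{(n+k+1)}x_{n+k},\,T_{(n+N)}\cdots T_{(n+k)}x_{n+k-1})\to0$ for $k=N-1,\dots,1$, and summing gives $\|x_n-T_{(n+N)}T_{(n+N-1)}\cdots T_{(n+1)}x_n\|\to0$. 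Passing to a subsequence along which this composition is a single fixed map $S=T_{(i+N)}\cdots T_{(i+1)}$, the hypothesis $F(S)=F$ reduces the problem to placing the weak limit in $F(S)$, i.e.\ one needs demiclosedness only for the single composite $S$ rather than for each $T_i$ separately. (The paper is itself terse at this last step, and along the way uses inequalities of the form $D_f(T_{[n]}x,T_{[n]}y)\le D_f(x,y)$, which is stronger than quasi-Bregman nonexpansiveness; so the passage warrants scrutiny regardless---but the structural point is that the composition hypothesis, not $\hat F(T_i)=F(T_i)$, is the intended tool.)
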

\begin{proof}
Let $p=P_{\Omega}^{f}\in\Omega$ from Lemma \ref{le8}, we obtain
$$D_{f}(p,u_{j,n})=D_{f}(p,Res^{f}_{g_j}x_n)\leq D_{f}(p,x_{n})$$
Now from \eqref{rec}, we obtain
\begin{eqnarray}\label{3.3}
\nonumber D_{f}(p,y_{n})&\leq&D_{f}(p,\nabla f^*((1-\alpha_{n})\nabla f(u_{j,n}))
\\&=&\nonumber D_{f}(p,\nabla f^*(\alpha_{n}\nabla f(0)+(1-\alpha_{n})\nabla f(u_{j,n}))
\\&\leq&\nonumber\alpha_{n}D_{f}(p,0)+(1-\alpha_{n})D_{f}(p,u_{j,n})
\\&\leq&\alpha_{n}D_{f}(p,0)+(1-\alpha_{n})D_{f}(p,x_{n})
\end{eqnarray}
Also from \eqref{rec}, \eqref{d} and \eqref{3.3}, we have
\begin{eqnarray}\label{3.3*}
D_{f}(p,x_{n+1})&\leq&\nonumber D_{f}(p,\nabla f^{*}((1-\beta_{n})\nabla f(y_{n})+\beta_{n}\nabla f(T_{[n]}y_{n})))
\\&\leq&\nonumber(1-\beta_{n})D_{f}(p,y_{n})+\beta_{n}D_{f}(p,T_{[n]}y_{n})
\\&\leq&\nonumber(1-\beta_{n})D_{f}(p,y_{n})+\beta_{n}D_{f}(p,y_{n})
\\&=&\nonumber D_{f}(p,y_{n})
\\&\leq&\alpha_{n}D_{f}(p,0)+(1-\alpha_{n})D_{f}(p,x_{n})
\\&\leq&\nonumber\max\{D_{f}(p,0),D_{f}(p,x_{n})\}
\end{eqnarray}
Thus, by induction we obtain
$$D_{f}(p,x_{n+1})\leq\max\{D_{f}(p,0),D_{f}(p,x_{n})\},~~\forall n\geq 0$$
which implies that $\{x_{n}\}$ is bounded and hence $\{y_{n}\},\{T_{[n]}y_{n}\},\{T_{[n]}x_{n}\}$ and $\{u_{j,n}\}$ are all bounded for each $j=1,2,\cdots,m$. Now from \eqref{rec} let $z_{n}:=\nabla f^{*}((1-\alpha_{n})\nabla f(u_{j,n}))$. Furthermore since $\alpha_{n}\to 0$ as $n\to\infty$, we obtain
\begin{eqnarray}\label{3.4}
||\nabla f(z_{n})-\nabla f(u_{j,n})||=\alpha_{n}||(-\nabla f(u_{j,n}))||\to0~~\textrm{as}~~n\to\infty.
\end{eqnarray}
Since $f$ is strongly coercive and uniformly convex on bounded subsets of $E$, $f^*$ is uniformly Fr$\acute{e}$chet differentiable on bounded sets. Moreover, $f^*$ is bounded on bounded sets, from \eqref{3.4}, we obtain
\begin{eqnarray}\label{3.5}
\underset{n\to\infty}{\lim}||z_{n}-u_{j,n}||=0.
\end{eqnarray}
On the other hand, In view of (3) in Theorem \ref{jen1}, we know that $dom f^* = E^*$ and $f^*$ is strongly coercive and
uniformly convex on bounded subsets. Let $s = sup\{||\nabla f(y_n)||,||\nabla f(T_{[n]}y_n)||\}$ and $\rho^{*}_{s}:E^*\to \mathbb{R}$ be the gauge of uniform convexity of the conjugate function $f^*$. Now from \eqref{rec}, Lemma \ref{jen} and \ref{jen3}, we obtain
\begin{eqnarray}\label{3.6}
D_{f}(p,y_{n})&\leq&D_{f}(p,z_{n})=\nonumber V(p,\nabla f(z_{n}))
\\&\leq&\nonumber V(p,\nabla f(z_n)+\alpha_{n}\nabla f(p))+\alpha_{n}\langle -\nabla f(p),z_{n}-p\rangle
\\&=&\nonumber D_{f}(p,\nabla f^{*}((1-\alpha_{n})\nabla f(u_{j,n})+\alpha_{n}\nabla f(p)))\\&&\nonumber+\alpha_{n}\nabla f(p))+\alpha_{n}\langle -\nabla f(p),z_{n}-p\rangle
\\&\leq&\nonumber\alpha_{n} D_{f}(p,p)+(1-\alpha_{n})D_{f}(p,u_{j,n})\\&&\nonumber+\alpha_{n}\langle-\nabla f(p),z_{n}-p\rangle
\\&\leq&(1-\alpha_{n})D_{f}(p,x_{n})+\alpha_{n}\langle -\nabla f(p),z_{n}-p\rangle
\end{eqnarray}
and
\begin{eqnarray}
D_{f}(p,x_{n+1})&\leq&\nonumber D_{f}(p,\nabla f^*((1-\beta_{n})\nabla f(y_{n})+\beta_{n}\nabla f(T_{[n]}y_{n})))
\\&=&\nonumber V(p,(1-\beta_{n})\nabla f(y_{n})+\beta_{n}\nabla f(T_{[n]}y_{n}))
\\&=&\nonumber f(p)-\langle p,(1-\beta_{n})\nabla f(y_{n})+\beta_{n}\nabla f(T_{[n]}y_{n})\rangle\\&&\nonumber
+f^{*}((1-\beta_{n})\nabla f(y_{n})+\beta_{n}\nabla f(T_{[n]}y_{n}))
\\&\leq&\nonumber(1-\beta_{n})f(p)+\beta_{n}f(p)-(1-\beta_{n})\langle p,\nabla f(y_{n})\rangle-\beta_{n}\langle p,\nabla f(T_{[n]}y_{n})
\\&&\nonumber+(1-\beta_{n})f^{*}(\nabla f(T_{[n]}y_{n}))+\beta_{n}f^{*}(\nabla f(T_{[n]}y_{n}))\\&&\nonumber-\beta_{n}(1-\beta_{n})\rho^{*}_{s}(||\nabla f(y_{n})-\nabla f(T_{[n]}y_{n})||)
\\&=&\nonumber(1-\beta_{n})V(p,\nabla f(y_{n}))+\beta_{n}V(p,\nabla f(T_{[n]}y_{n}))\\&&\nonumber-\beta_{n}(1-\beta_{n})\rho^{*}_{s}(||\nabla f(y_{n})-\nabla f(T_{[n]}y_{n})||)
\\&=&\nonumber(1-\beta_{n})D_{f}(p,y_{n})+\beta_{n}D_{f}(p,T_{[n]}y_{n})\\&&\nonumber-\beta_{n}(1-\beta_{n})\rho^{*}_{s}(||\nabla f(y_{n})-\nabla f(T_{[n]}y_{n})||)
\\&\leq&\nonumber(1-\beta_{n})D_{f}(p,y_{n})+\beta_{n}D_{f}(p,y_{n})\\&&\nonumber-\beta_{n}(1-\beta_{n})\rho^{*}_{s}(||\nabla f(y_{n})-\nabla f(T_{[n]}y_{n})||)
\\&=&\nonumber D_{f}(p,y_{n})-\beta_{n}(1-\beta_{n})\rho^{*}_{s}(||\nabla f(y_{n})-\nabla f(T_{[n]}y_{n})||)
\\&\leq&\nonumber(1-\alpha_{n})D_{f}(p,x_{n})+\alpha_{n}\langle -\nabla f(p),z_{n}-p\rangle
\\&&-\beta_{n}(1-\beta_{n})\rho^{*}_{s}(||\nabla f(y_{n})-\nabla f(T_{[n]}y_{n})||)\label{3.7}
\\&\leq&(1-\alpha_{n})D_{f}(p,x_{n})+\alpha_{n}\langle -\nabla f(p),z_{n}-p\rangle\label{3.8}
\end{eqnarray}
Now, we consider two cases:\\
{\bf Case 1.} Suppose that there exists $n_{0}\in \mathbb{N}$ such that $\{D_{f}(p,x_{n})\}$ is non increasing. In this situation $\{D_{f}(p,x_{n})\}$ is convergent. Then from \eqref{3.7} we obtain
\begin{eqnarray}\label{3.9}
\beta_{n}(1-\beta_{n})\rho^{*}_{s}(||\nabla f(y_{n})-\nabla f(T_{[n]}y_{n})||)\to 0~~\textrm{as}~~n\to\infty,
\end{eqnarray}
which implies, by the property of $\rho_{s}$ and since $\beta_{n}\in[c,d]\subset(0,1)$, we obtain
\begin{eqnarray}\label{3.10}
\underset{n\to\infty}{\lim}||\nabla f(y_{n})-\nabla f(T_{[n]}y_{n})||=0
\end{eqnarray}
Since $f$ is strongly coercive and uniformly convex on bounded subsets of $E$, $f^*$ is uniformly Fr$\acute{e}$chet differentiable on bounded sets. Moreover, $f^*$ is bounded on bounded sets, from \eqref{3.10}, we obtain
\begin{eqnarray}\label{3.11}
\underset{n\to\infty}{\lim}||y_{n}-T_{[n]}y_{n}||=0.
\end{eqnarray}
Now from \eqref{eq2}, we obtain
\begin{eqnarray*}
D_{f}(y_{n},T_{[n]}y_{n})&=&D_{f}(p,T_{[n]}y_{n})-D_{f}(p,y_{n})\\&&+\langle \nabla f(T_{[n]}y_{n})-\nabla f(y_{n}),p-y_{n}\rangle
\\&\leq&D_{f}(p,y_{n})-D_{f}(p,y_{n})\\&&+\langle \nabla f(T_{[n]}y_{n})-\nabla f(y_{n}),p-y_{n}\rangle
\end{eqnarray*}
therefore
\begin{eqnarray}\label{3.12}
D_{f}(y_{n},T_{[n]}y_{n})\leq||\nabla f(y_{n})-\nabla f(T_{[n]}y_{n})||||p-y_{n}||\to0~~\textrm{as}~~n\to\infty.
\end{eqnarray}

Also, from \eqref{le8}, we have
\begin{eqnarray}\label{3.13}
D_{f}(x_{n},u_{j,n})&=&\nonumber D_{f}(p,Res^{f}_{g_j}x_n)
\\&\leq&\nonumber D_{f}(p,Res^{f}_{g_j}x_n)-D_{f}(p,x_{n})
\\&\leq&D_{f}(p,x_{n})-D_{f}(p,x_{n})\to0~~\textrm{as}~~n\to\infty.
\end{eqnarray}
Then, we have from Lemma \ref{le3} that
\begin{eqnarray}\label{3.14}
\underset{n\to\infty}{\lim}||x_{n}-u_{j,n}||=0
\end{eqnarray}

Also, from (b) of Lemma \ref{le2}, we have
\begin{eqnarray}\label{3.13*}
D_{f}(y_{n},P_{C}z_{n})&=&\nonumber D_{f}(y_n,z_n)
\\&=&\nonumber D_{f}(y_{n},\nabla f^*(\nabla f(0)+(1-\alpha_{n})\nabla f(u_{j,n}))
\\&\leq&\nonumber\alpha_{n}D_{f}(y_n,0)+(1-\alpha_{n})D_{f}(y_{n},u_{j,n})
\\&\leq&\alpha_{n}D_{f}(y_n,0)+(1-\alpha_{n})D_{f}(u_{j,n},u_{j,n})\to0~~\textrm{as}~~n\to\infty.
\end{eqnarray}
Then, we have from Lemma \ref{le3} that
\begin{eqnarray}\label{3.14*}
\underset{n\to\infty}{\lim}||y_{n}-z_{n}||=0
\end{eqnarray}

From \eqref{3.5} and \eqref{3.14}, we obtain
\begin{eqnarray}\label{3.155}
\underset{n\to\infty}{\lim}||x_{n}-z_{n}||=0.
\end{eqnarray}
From \eqref{3.14*} and \eqref{3.155}, we obtain
\begin{eqnarray}\label{3.15}
\underset{n\to\infty}{\lim}||x_{n}-y_{n}||=0.
\end{eqnarray}

Since $f$ is strongly coercive and uniformly convex on bounded subsets of $E$, $f^*$ is uniformly Fr$\acute{e}$chet differentiable on bounded sets. Moreover, $f^*$ is bounded on bounded sets, from \eqref{3.15}, we obtain
\begin{eqnarray}\label{3.15*}
\underset{n\to\infty}{\lim}||\nabla f(x_{n})-\nabla f(z_{n})||=0.
\end{eqnarray}

Also from \eqref{3.11} and \eqref{3.15}
\begin{eqnarray}\label{3.16}
\underset{n\to\infty}{\lim}||x_{n}-T_{[n]}y_{n}||=0.
\end{eqnarray}

Now from \eqref{eq2} and \eqref{3.3}, we obtain
\begin{eqnarray*}
D_{f}(x_{n},y_{n})&=&D_{f}(p,y_{n})-D_{f}(p,x_{n})+\langle \nabla f(x_{n})-\nabla f(y_{n}),p-x_{n}\rangle
\\&\leq&\alpha_{n}D_{f}(p,0)+(1-\alpha_{n})D_{f}(p,x_{n})-D_{f}(p,x_{n})\\&&+\langle \nabla f(x_{n})-\nabla f(y_{n}),p-x_{n}\rangle
\\&=&\alpha_{n}(D_{f}(p,0)-D_{f}(p,x_{n}))\\&&+\langle \nabla f(x_{n})-\nabla f(y_{n}),p-x_{n}\rangle
\end{eqnarray*}
therefore, from \eqref{3.15*}, we obtain
\begin{eqnarray}\label{3.17}
D_{f}(x_{n},y_{n})&\leq&\nonumber\alpha_{n}(D_{f}(p,0)-D_{f}(p,x_{n}))\\&&+||\nabla f(x_{n})-\nabla f(y_{n})||||p-x_{n}||\to0~~\textrm{as}~~n\to\infty.
\end{eqnarray}
and also
\begin{eqnarray*}
D_{f}(x_{n},T_{[n]}y_{n})&=&D_{f}(p,T_{[n]}y_{n})-D_{f}(p,x_{n})\\&&+\langle \nabla f(x_{n})-\nabla f(T_{[n]}y_{n}),p-x_{n}\rangle
\\&\leq&D_{f}(p,y_{n})-D_{f}(p,x_{n})\\&&+\langle \nabla f(x_{n})-\nabla f(T_{[n]}y_{n}),p-x_{n}\rangle
\\&\leq&\alpha_{n}D_{f}(p,0)+(1-\alpha_{n})-D_{f}(p,x_{n})\\&&+\langle \nabla f(x_{n})-\nabla f(T_{[n]}y_{n}),p-x_{n}\rangle
\\&=&\alpha_{n}(D_{f}(p,0)-D_{f}(p,x_{n}))+\langle \nabla f(x_{n})-\nabla f(T_{[n]}y_{n}),p-x_{n}\rangle
\end{eqnarray*}
thus
\begin{eqnarray}\label{3.18}
D_{f}(x_{n},T_{[n]}y_{n})&\leq&\nonumber\alpha_{n}|D_{f}(p,0)-D_{f}(p,x_{n})|\\&&+ ||\nabla f(T_{[n]}y_{n})-\nabla f(x_{n})||||p-x_{n}||\to0~~\textrm{as}~~n\to\infty.
\end{eqnarray}
Also, from \eqref{3.17}
\begin{eqnarray}\label{3.19}
D_{f}(T_{[n]}x_{n},T_{[n]}y_{n})\leq D_{f}(x_{n},y_{n})\to0~~\textrm{as}~~n\to\infty.
\end{eqnarray}
Then, we have from Lemma \ref{le3} that
\begin{eqnarray}\label{3.20}
\underset{n\to\infty}{\lim}||T_{[n]}x_{n}-T_{[n]}y_{n}||=0.
\end{eqnarray}
Then from \eqref{rec} and \eqref{3.11}, we have
\begin{eqnarray}\label{3.21}
||\nabla f(x_{n+1})-\nabla f(y_{n})||=\beta_{n}||\nabla f(T_{[n]}y_{n})-\nabla f(y_{n})||\to0~~\textrm{as}~~n\to\infty.
\end{eqnarray}

which implies

\begin{eqnarray}\label{3.21*}
||x_{n+1}-y_{n}||\to0~~\textrm{as}~~n\to\infty .
\end{eqnarray}

and

$$||x_{n}-T_{[n]}x_{n}||\leq||x_{n}-y_{n}||+||y_{n}-T_{[n]}y_{n}||+||T_{[n]}y_{n}-T_{[n]}x_{n}||$$
from \eqref{3.11}, \eqref{3.15} and \eqref{3.20}, we obtain
\begin{eqnarray}\label{3.22}
\underset{n\to\infty}{\lim}||x_{n}-T_{[n]}x_{n}||=0.
 \end{eqnarray}
which implies that
\begin{eqnarray}\label{3.22*}
\underset{n\to\infty}{\lim}||\nabla f(x_{n})-\nabla f(T_{[n]}x_{n})||=0.
\end{eqnarray}

Also from \eqref{3.15} and \eqref{3.21*}, we obtain
\begin{eqnarray}\label{3.23}
||x_{n+1}-x_{n}||\leq||x_{n+1}-y_{n}||+||y_{n}-x_{n}||\to0~~\textrm{as}~~n\to\infty.
\end{eqnarray}
But
$$||x_{n+N}-x_{n}||\leq||x_{n+N}-x_{n+N-1}||+||x_{n+N-1}-x_{n+N-2}||+\cdots+||x_{n+1}-x_{n}||\to0$$ as $n\to\infty.$ Hence
\begin{eqnarray}\label{3.23*}
\underset{n\to\infty}{\lim}||x_{n+N}-x_{n}||=0.
\end{eqnarray}

From the uniformly continuous of $\nabla f$, we have from \eqref{3.23} that
\begin{eqnarray}\label{3.24}
\underset{n\to\infty}{\lim}||\nabla f(x_{n+1})-\nabla f(x_{n})||=0.
\end{eqnarray}
From \eqref{eq2}, \eqref{3.3*} and \eqref{3.24}, we obtain
\begin{eqnarray*}
D_{f}(x_{n},x_{n+1})&=&\nonumber D_{f}(p,x_{n+1})-D_{f}(p,x_{n})\\&&\nonumber+\langle \nabla f(x_{n})-\nabla f(x_{n+}),p-x_{n}\rangle
\\&\leq&\nonumber\alpha_{n} D_{f}(p,0)+(1-\alpha_{n})D_{f}(p,x_{n})-D_{f}(p,x_{n})\\&&+\langle \nabla f(x_{n})-\nabla f(x_{n+1}),p-x_{n}\rangle
\end{eqnarray*}
which implies
\begin{eqnarray}\label{3.25}
D_{f}(x_{n},x_{n+1})&\leq&\nonumber\alpha_{n}|D_{f}(p,0)-D_{f}(p,x_{n})|\\&&+||\nabla f(x_{n+1})-\nabla f(x_{n})||||p-x_n||\to0~~\textrm{as}~~n\to\infty.
\end{eqnarray}
Also from quasi-Bregman nonexpansive of $T_{[n]}$, we have
\begin{eqnarray}\label{3.25*}
D_{f}(T_{[n]}x_{n},T_{[n]}x_{n+1})&\leq&D_{f}(x_{n},x_{n+1})\to0~~\textrm{as}~~n\to\infty.
\end{eqnarray}
which implies
\begin{eqnarray}\label{3.25**}
\underset{n\to\infty}{\lim}||T_{[n]}x_{n}- T_{[n]}x_{n+1}||=0.
\end{eqnarray}
and from the uniform continuous of $\nabla f$, we obtain
\begin{eqnarray}\label{3.25***}
\underset{n\to\infty}{\lim}||\nabla f(T_{[n]}x_{n})-\nabla f(T_{[n]}x_{n+1})||=0.
\end{eqnarray}

Also from \eqref{eq2} and \eqref{3.22}, we obtain
\begin{eqnarray}\label{3.26}
D_{f}(x_n,T_{[n]}x_{n})&=&\nonumber D_{f}(p,T_{[n]}x_{n})-D_{f}(p,x_{n})\\&&+\nonumber\langle \nabla f(x_{n})-\nabla f(T_{[n]}x_{n}),p-x_{n}\rangle
\\&\leq&\nonumber D_{f}(p,x_{n})-D_{f}(p,x_{n})\\&&\nonumber+\langle \nabla f(x_{n})-\nabla f(T_{[n]}x_{n}),p-x_{n}\rangle
\\&\leq&||\nabla f(T_{[n]}x_{n})-\nabla f(x_{n})||||p-x_n||\to0~~\textrm{as}~~n\to\infty.
\end{eqnarray}
From \eqref{3.22}, \eqref{3.23} and \eqref{3.25**}, we obtain
\begin{eqnarray}\label{3.26*}
||x_{n}-T_{[n+1]}x_{n}||&\leq&\nonumber||x_{n}-x_{n+1}||+||x_{n+1}-T_{[n+1]}x_{n+1}||\\&&+||T_{[n+1]}x_{n+1}-T_{[n+1]}x_{n}||\to0~~\textrm{as}~~n\to\infty.
\end{eqnarray}
which from uniform continuous of $\nabla f$ implies
\begin{eqnarray}\label{3.27}
\underset{n\to\infty}{\lim}||\nabla f(T_{[n]}x_{n})-\nabla f(T_{[n+1]}x_{n})||=0
\end{eqnarray}
from \eqref{eq2} and \eqref{3.27}, we obtain
\begin{eqnarray}\label{3.28}
D_{f}(x_{n},T_{[n+1]}x_{n})&\leq&\nonumber D_{f}(p,T_{[n+1]}x_{n})-D_{f}(p,x_{n})\\&&\nonumber+\langle \nabla f(x_{n})-\nabla f(T_{[n+1]}x_{n}),p-x_{n}\rangle
\\&\leq&\nonumber D_{f}(p,x_{n})-D_{f}(p,x_{n})\\&&+|| \nabla f(T_{[n+1]}x_{n})-\nabla f(x_{n})||||p-x_{n}||\to0~~\textrm{as}~~n\to\infty.
\end{eqnarray}

From \eqref{eq2}, \eqref{3.25}, \eqref{3.27} and \eqref{3.28}
\begin{eqnarray}\label{3.29}
D_{f}(x_{n+1},T_{[n+1]}x_{n})&=&\nonumber D_{f}(x_{n+1},x_{n})+D_{f}(x_{n},T_{[n+1]}x_{n})\\&&\nonumber+\langle \nabla f(T_{[n+1]}x_{n})-\nabla f(x_{n}),x_{n}-x_{n+1}\rangle
\\&\leq&\nonumber D_{f}(x_{n+1},x_{n})+D_{f}(x_{n},T_{[n+1]}x_{n})\\&&+||\nabla f(T_{[n+1]}x_{n})-\nabla f(x_{n})||||x_{n}-x_{n+1}||\to0~~\textrm{as}~~n\to\infty.
\end{eqnarray}
Also \eqref{eq2}, \eqref{3.25}, and \eqref{3.29}
\begin{eqnarray}\label{3.30}
D_{f}(x_{n},T_{[n+1]}x_{n})&=&\nonumber D_{f}(x_{n},x_{n+1})+D_{f}(x_{n+1},T_{[n+1]}x_{n})\\&&\nonumber+\langle \nabla f(x_{n+1})-\nabla f(T_{[n+1]}x_{n+1}),x_{n+1}-x_{n}\rangle
\\&=&\nonumber D_{f}(x_{n},x_{n+1})+D_{f}(x_{n+1},T_{[n+1]}x_{n})\\&&+||\nabla f(T_{[n+1]}x_{n})-\nabla f(x_{n+1})||||x_{n+1}-x_{n}||\to0~~\textrm{as}~~n\to\infty.
\end{eqnarray}

Using the quasi-Bregman nonexpansivity of $T_{(i)}$ for each $i$, we obtain
, we obtain the following finite table
$$D_{f}(x_{n+N},T_{(n+N)}x_{n+N-1})\to0~~\textrm{as}~~n\to\infty$$
$$D_{f}(T_{(n+N)}x_{n+N-1},T_{(n+N)}T_{(n+N-1)}x_{n+N-2})\to0~~\textrm{as}~~n\to\infty$$
$$\vdots$$
$$D_{f}(T_{(n+N)}\cdots T_{(n+2)}x_{n+1},T_{(n+N)}\cdots T_{(n+1)}x_{n})\to0~~\textrm{as}~~n\to\infty$$
then, applying Lemma \ref{le3} on each line above, we obtain
$$x_{n+N}-T_{(n+N)}x_{n+N-1}\to0~~\textrm{as}~~n\to\infty$$
$$T_{(n+N)}x_{n+N-1}-T_{(n+N)}T_{(n+N-1)}x_{n+N-2}\to0~~\textrm{as}~~n\to\infty$$
$$\vdots$$
$$T_{(n+N)}\cdots T_{(n+2)}x_{n+1}-T_{(n+N)}\cdots T_{(n+1)}x_{n}\to0~~\textrm{as}~~n\to\infty$$

and adding up this table, we obtain
\begin{eqnarray*}
x_{n+N}-T_{(n+N)}T_{(n+N-1)}\cdots T_{(n+1)}x_{n}\to0~\textrm{as}~n\to\infty.
\end{eqnarray*}
Using this and \eqref{3.23*}, we obtain
\begin{eqnarray}\label{luk}
\underset{n\to\infty}{\lim}||x_{n}-T_{(n+N)}T_{(n+N-1)}\cdots T_{(n+1)}x_{n}||=0.
\end{eqnarray}
Also from quasi-Bregman nonexpansive of $T_{(i)}$, for each $i$, we have
\begin{eqnarray}\label{3.32}
{}\\ \nonumber D_{f}(T_{(n+N)}T_{(n+N-1)}\cdots T_{(n+1)}x_{n},T_{(n+N)}T_{(n+N-1)}\cdots T_{(n+1)}y_{n})&\leq&D_{f}(x_{n},y_{n})\to0~~
\end{eqnarray}
$\textrm{as}~~n\to\infty.$ Then, we have from Lemma \ref{le3} that
\begin{eqnarray}\label{3.33}
{}\\ \nonumber T_{(n+N)}T_{(n+N-1)}\cdots T_{(n+1)}x_{n}-T_{(n+N)}T_{(n+N-1)}\cdots T_{(n+1)}y_{n}\to0~~\textrm{as}~~n\to\infty.
\end{eqnarray}
Since
\begin{eqnarray*}
\lefteqn{||y_{n}-T_{(n+N)}T_{(n+N-1)}\cdots T_{(n+1)}y_{n}||\leq||y_{n}-x_{n}||}\\&&+||x_{n}-T_{(n+N)}T_{(n+N-1)}\cdots+ T_{(n+1)}x_{n}||\\&&+||T_{(n+N)}T_{(n+N-1)}\cdots T_{(n+1)}x_{n}-T_{(n+N)}T_{(n+N-1)}\cdots T_{(n+1)}y_{n}||
\end{eqnarray*}
then, from \eqref{3.15}, \eqref{luk} and \eqref{3.33}, we obtain
\begin{eqnarray}\label{3.34}
\underset{n\to\infty}{\lim}||y_{n}-T_{(n+N)}T_{(n+N-1)}\cdots T_{(n+1)}y_{n}||=0.
\end{eqnarray}
Following the argument from \eqref{3.32} to \eqref{3.34} by replacing $y_{n}$ with $z_{n}$ and using \eqref{3.155}, we obtain
\begin{eqnarray}\label{3.344}
\underset{n\to\infty}{\lim}||z_{n}-T_{(n+N)}T_{(n+N-1)}\cdots T_{(n+1)}z_{n}||=0.
\end{eqnarray}

Let $\{x_{n_i}\}$ be a subsequence of $\{x_{n}\}$.
Since $\{x_n\}$ is bounded and $E$ is reflexive, without loss of generality, we may assume that $x_{n_i}\rightharpoonup q$ for some $q\in F$ and since $x_{n}-z_{n}\to0$ as $n\to\infty$, then $z_{n_i}\rightharpoonup q$
Since the pool of mappings of $T_{[n]}$ is finite, passing to a further subsequence if necessary, we may further assume that, for some $i\in\{1,2,\cdots,N\}$, 
 from \eqref{3.344}, we get
$$z_{n_i}-T_{(i+N)}\cdots T_{(i+1)}z_{n_i}\to0~~\textrm{as}~i\to\infty$$
and also
 $$\underset{n\to\infty}{\limsup}\langle -\nabla f(p),z_{n}-p\rangle=\underset{i\to\infty}{\lim}\langle-\nabla f(p),z_{n_i}-p\rangle$$

Noticing that $u_{j,n}=Res^{f}_{g_{j}}(x_{n})$ for each $j=1,2,\cdots,m$, we obtain

$$g_{j}(u_{j,n},y)+\langle y-u_{j,n},\nabla f(u_{j,n})-\nabla f(x_n)\rangle\geq0,~~\forall y\in C$$
Hence
$$g_{j}(u_{j,n_i},y)+\langle y-u_{j,n_i},\nabla f(u_{j,n_i})-\nabla f(x_{n_i})\rangle\geq0,~~\forall y\in C.$$

From the (A2), we note that for each $j=1,2,\cdots,m$,
\begin{eqnarray*}
||y-u_{j,n}||\frac{||\nabla f(u_{j,n_i})-\nabla f(x_{n_i})||}{r_{n_i}}&\geq&
\langle y-u_{j,n},\nabla f(u_{j,n})-\nabla f(x_n)\rangle\\&\geq&-g_{j}(u_{j,n_i})\geq g_{j}(y,u_{j,n_i}),~~\forall y\in C.
\end{eqnarray*}
Taking the limit as $i\to\infty$ in above inequality and from (A4) and $u_{j,n_i}\rightharpoonup q,$ we have $g_{j}(y,q)\leq 0$ for each $j=1,2,\cdots,m$. For $0<t<1$ and $y\in C$, define $y_{t}=ty+(1-t)q$. Noticing that $y,q\in C,$ we obtain $y_t\in C$, which yield that $g_{j}(y_t,q)\leq0$. It follows from (A1) that
$$0=g_{j}(y_t,y_t)\leq t g_{j}(y_t,y)+(1-t)g_{j}(y_t,q)\leq tg_{j}(y_t,y).$$
That is for each $j=1,2,\cdots,m$, we have $g_{j}(y_t,y)\geq0.$\\
Let $t\downarrow0$, from (A3), we obtain $g_{j}(q,y)\geq0$ for any $y\in C,$ for each $j=1,2,\cdots,m$. This implies that $q\in\cap^{m}_{j=1}EP(g_{j}).$
Hence $q\in\Omega.$
 It follows from the definition of the Bregman projection that
\begin{eqnarray*}
\underset{n\to\infty}{\limsup}\langle -\nabla f(p),z_{n}-p\rangle&=&\underset{i\to\infty}{\lim}\langle-\nabla f(p),z_{n_i}-p\rangle
\\&\leq&\langle-\nabla f(p),q-p\rangle\leq0.
\end{eqnarray*}
It follows from Lemma \ref{xu6} and \eqref{3.8} that $D_{f}(p,x_{n})\to 0$ as $n\to\infty$. Consequently, from Lemma \ref{le3}, we obtain $x_{n}\to p$ as $n\to\infty.$\\
{\bf Case 2.} Suppose $D_{f}(p,x_{n})$ is not monotone decreasing sequences, then set $\Phi_{n}:=D_{f}(p,x_{n})$ and let $\tau:\mathbb{N}\to\mathbb{N}$ be a mapping defined for all $n\geq N_{0}$ for some sufficiently large $N_0$ by $$\tau(n):=\max\{k\in\mathbb{N}:k\leq n,\Phi_{k}\leq\Phi_{k+1}\}.$$
Then by Lemma \ref{mainge} $\tau(n)$ is a non-decreasing sequence such that $\tau(n)\to\infty$ as $n\to\infty$ and $\Phi_{\tau(n)}\leq\Phi_{\tau(n)+1}$, for $n\geq N_{0}$. Then from \eqref{3.7} and the fact that $\alpha_{\tau(n)}\to0$, we obtain that
$$\rho^{*}_{s}(||\nabla f(y_{\tau(n)})-\nabla f(T_{[\tau(n)]})y_{\tau(n)}||)\to0~~\textrm{as}~~\tau(n)\to\infty.$$
 Following the same argument as in Case 1, we obtain
$$y_{\tau(n)}-T_{(i+N)}\cdots T_{(i+1)}y_{\tau(n)}\to0~~\textrm{as}~\tau(n)\to\infty$$
and also we obtain
\begin{eqnarray*}
\underset{\tau(n)\to\infty}{\limsup}\langle -\nabla f(p),y_{\tau(n)}-p\rangle\leq0.
\end{eqnarray*}

Then from \eqref{3.8}, we obtain that
\begin{eqnarray}\label{3.41}
0&\leq&D_{f}(p,x_{\tau(n)+1})-D_{f}(p,x_{\tau(n)})\nonumber\\&\leq&\alpha_{\tau(n)}(\langle -\nabla f(p),y_{\tau(n)}-p\rangle-D_{f}(p,x_{\tau(n)}))
\end{eqnarray}

It follows from \eqref{3.41} and $\Phi_{n}\leq\Phi_{\tau(n)+1}$, $\alpha_{\tau(n)}>0$ that
\begin{eqnarray*}
D_{f}(p,x_{\tau(n)})\leq\langle -\nabla f(p),y_{\tau(n)}-p\rangle\to0
\end{eqnarray*}
as $\tau(n)\to\infty$. Thus
$$\underset{\tau(n)\to\infty}{\lim}\Phi_{\tau(n)}=\underset{\tau(n)\to\infty}{\lim}\Phi_{\tau(n)+1}=0.$$
Furthermore, for $n\geq N_{0}$, if $n\neq\tau(n)$ (i.e., $\tau(n)< n$), because $\Phi_{j}>\Phi_{j+1}$ for $\tau(n)+1\leq j\leq n.$ It then follows that for all $n\geq N_{0}$ we have
$$0\leq\Phi_{n}\leq\max\{\Phi_{\tau(n)},\Phi_{\tau(n)+1}\}=\Phi_{\tau(n)+1}.$$
This implies that $\underset{n\to\infty}{\lim}\Phi_{n}=0$, and hence $D_{f}(p,x_{n})\to0$ as $n\to\infty$. Consequently, from Lemma \ref{le3}, we obtain $x_{n}\to p$ as $n\to\infty.$ Therefore from the above two cases, we conclude that $\{x_{n}\}$ converges strongly to $p\in\Omega$ and this complete the proof.

\end{proof}


\begin{thebibliography}{99}
\bibitem{Alg} M,A  Alghamdi, H Zegeye and N Shahzad   \emph{Strong convergence for quasi-Bregman nonexpansive mappings in reflexive Banach spaces},J. Appl. Math, 2014, (2014), 1-9.
\bibitem{asp} E. Asplund, R.T. Rockafellar, \emph{Gradients of convex functions, Trans}. Amer. Math. Soc., 139 (1969) 443-467.
 \bibitem{bau}H. H. Bauschke, J. M. Borwein, and P. L. Combettes, \emph{Essential smoothness, essential strict
convexity, and Legendre functions in Banach spaces}, Comm. Contemp. Math. 3 (2001), 615 - 647.
\bibitem{bau1} H.H. Bauschke, J.M. Borwein, \emph{Legendre functions and the method of random Bregman projections}, J. Convex Anal., 4 (1997) 27-67.
\bibitem{blum} E. Blum, W. Oettli, \emph{From optimization and variational inequalities to equilibrium problems}, Math. Student, 63 (1994) 123-145.
\bibitem{bon} J.F. Bonnans, A. Shapiro, \emph{Perturbation Analysis of Optimization Problems}, Springer Verlag, New York, 2000.
 \bibitem{bregman}L.M. Bregman, \emph{The relaxation method for finding the common point of convex sets and its application to the solution of problems in convex  programming}, USSR Comput. Math. Math. Phys., 7 (1967) 200-217.
\bibitem{but}D. Butnariu, E. Resmerita, \emph{Bregman distances, totally convex functions and a method for solving operator equations in Banach spaces}, Abstr. Appl.Anal., 2006 (2006) 1-39. Art. ID 84919.
 \bibitem{but1}D. Butnariu, A.N. Iusem, \emph{Totally Convex Functions for Fixed Points Computation and Infinite Dimensional Optimization}, Kluwer Academic Publish- ers,Dordrecht, 2000.
\bibitem{cen} Y. Censor, A. Lent, \emph{An iterative row-action method for interval convex programming}, J. Optim. Theory Appl., 34 (1981) 321-353.
\bibitem{20} Chen, J.W., Wan, Z. and Yuan, Z.;\emph{ Approximation of Fixed Points of Weak Bregman Relatively Nonexpansive Mappings in Banach
Spaces}, Inter. J. Math. Math. Sci., 2011, (2011), 1-23.
\bibitem{com} P.L. Combettes, S.A. Hirstoaga, \emph{Equilibrium programming in Hilbert spaces}, J. Nonlinear Convex Anal., 6 (2005) 117-136.
\bibitem{hir}J.B. Hiriart-Urruty, C. Lemarchal, Grundlehren der mathematischen Wis- senschaften, in: \emph{Convex Analysis and Minimization Algorithms II}, vol. 306, Springer-Verlag, 1993.
\bibitem{mainge}P. E. Maing$\acute{e}$, \emph{Strong convergence of projected subgradient methods for nonsmooth and
nonstrictly convex minimization}, Set-Valued Anal. 16 (2008), no. 7-8, 899–912.
\bibitem{mat} S. Matsushita, W. Takahashi, \emph{Weak and strong convergence theorems for relatively nonexpansive mappings in Banach spaces}, Fixed Point Theory Appl. (2004) 37–47.
\bibitem{koh}F. Kohsaka, W. Takahashi, \emph{ Proximal point algorithms with Bregman functions in Banach spaces}, J. Nonlinear Convex
Anal. 6(3), 505-523 (2005)
 \bibitem{kumam}W. Kumam, U. Witthayarat and P. Kumam, \emph{Convergence theorem for equilibrium problem and Bregman strongly nonexpansive mappings in Banach space},Optimization, 2015, dx.doi.org/10.1080/02331934.2015.1020942.
 \bibitem{nakajo}K. Nakajo, W. Takahashi,  \emph{Strong convergence theorems for nonexpansive mappings and nonexpansive semigroups}. Journal of Mathematical Analysis and Applications. 279(2), (2003) 372-379.
 \bibitem{Panc} C.T. Panc, C.F. Wen and E  Naraghirad   \emph{Weak convergence theorems for Bregman relatively nonexpansive mappings in Banach spaces},J. Appl. Math, 2014 (2014), 1-9.
      \bibitem{phe}R.P. Phelps, \emph{Convex Functions, Monotone Operators, and Differentiability}, seconded, in: Lecture Notes in Mathematics, vol. 1364, Springer Verlag, Berlin, 1993.
\bibitem{reich} S. Reich, S. Sabach, \emph{A strong convergence theorem for a proximal-type algorithm in reflexive Banach spaces}, J. Nonlinear Convex Anal., 10 (2009)471-485.
\bibitem{jenchi}Naraghirad and Yao\emph{Bregman weak relatively nonexpansive mappings in Banach spaces} Fixed Point Theory and Applications 2013, 2013:141
\bibitem{reich2}S. Reich, S. Sabach, \emph{Two strong convergence theorems for Bregman strongly nonexpansive operators in reflexive Banach spaces}, Nonlinear Anal. TMA, 73 (2010) 122-135.
\bibitem{narag} Naraghirad, E, Yao, J-C: \emph{Bregman weak relatively nonexpansive mappings in Banach spaces}, Fixed Point Theory Appl.
2013, 141 (2013).
\bibitem{narag1}E. Naraghirad and S. Timnak, \emph{Strong convergence theorems for Bregman W-mappings with applications to convex feasibility problems in Banach spaces} Fixed Point Theory and Applications  (2015) 2015:149 DOI 10.1186/s13663-015-0395-1.
 \bibitem{reich3}S. Reich, S. Sabach, \emph{Existence and approximation of fixed points of Bregman firmly nonexpansive mappings in reflexive Banach spaces}, in: Fixed-Point Algorithms for Inverse Problems in Science and Engineering, Springer, New York., 2011, pp. 301-316.
\bibitem{reich1} S. Reich, S. Sabach, \emph{Two strong convergence theorems for a proximal method in reflexive Banach spaces}, Numer. Funct. Anal. Optim., 31 (2010) 22-44.
 \bibitem{res} E. Resmerita, On total convexity, \emph{Bregman projections and stability in Banach spaces}, J. Convex Anal., 11 (2004) 1-16.
\bibitem{tada}A. Tada,  and W. Takahashi,  \emph{Strong convergence theorem for an equilibrium problem and a
  nonexpansive mapping in Nonlinear Analysis and Convex Analysis}. Tada, A. and Takahashi, W.Eds., PP. 609-617 Yokohama Publishers, Yokohama,Japan. 2007
 \bibitem{tak1}W. Takahashi,  and K. Zembayashi,  \emph{Strong and weak convergence theorems for equilibrium problems and relatively nonexpansive mappings in Banach spaces}. Nonlinear Analysis: Theory, Methods and Applications. 70(1), (2009) 45-57.
 \bibitem{stak}Takahashi, S. and Takahashi, W. \emph{Viscosity approximation methods for equilibrium problems and fixed point problems in Hilbert spaces.} Journal of Mathematical Analysis and Applications. 1331(1), (2003) 372-379.
 \bibitem{xu6}H. K. Xu, \emph{Another control condition in an iterative method for nonexpansive mappings},
Bull. Austral. Math. Soc. 65 (2002), no. 1, 109–113.
 \bibitem{zal} C. Z$\check{a}$linescu, \emph{Convex Analysis in General Vector Spaces},World Scientific, Publishing Co., Inc., River Edge, NJ, 2002.
 \bibitem{Zeg} H. Zegeye and N. Shahzad   \emph{Convergence theorems for right Bregman strongly nonexpansive mappings in reflexive Banach spaces},Abstr. Appl. Anal, 2014 (2014), 1-8.

 \bibitem{Zege}  H. Zegeye and N. Shahzad     \emph{Strong Convergence Theorems for common fixed point of finite family of Bregman weak relatively nonexpansive mappings in reflexive Banach spaces},Sci. World J., 2014 (2014), 1-8.

\end{thebibliography}
\end{document}